\theoremstyle{plain}
\newtheorem{theorem}{Theorem}[section]
\newtheorem{corollary}[theorem]{Corollary}
\newtheorem{lemma}[theorem]{Lemma}
\newtheorem{proposition}[theorem]{Proposition}
\theoremstyle{remark}
\newtheorem*{remark}{Remarks}
\newtheorem*{claim}{Claim}
\begin{document}

\title[Topologies, descriptive complexity, and equivalence relations]{Transfinite sequences of topologies,\\ descriptive complexity, and\\ approximating equivalence relations}

\author{S{\l}awomir Solecki}
\address{Department of Mathematics, Malott Hall, Cornell University, Ithaca, NY 14853}
\email{ssolecki@cornell.edu}

\begin{abstract} We introduce the notion of filtration between topologies and study its stabilization properties. Descriptive set theoretic 
complexity plays a role in this study. Filtrations lead to natural transfinite sequences approximating a given equivalence relation. We investigate those. 
\end{abstract}

\thanks{Research supported by NSF grant DMS-1800680} 

\keywords{Filtration of topologies, definable equivalence relations}

\subjclass[2010]{03E15, 54H05} 


\maketitle

\section{Introduction}

The aim of the present paper is to describe the following general phenomenon: under appropriate topological conditions, 
increasing transfinite sequences of topologies interpolating between 
two given topologies $\sigma\subseteq \tau$ stabilize at $\tau$ and, under appropriate additional descriptive set theoretic conditions, the stabilization 
occurs at a countable stage of the interpolation. Increasing sequences of topologies play an important role in certain descriptive set theoretic considerations; see, for example, 
\cite[Section 1]{Lo}, \cite[Sections 5.1--5.2]{BK}, \cite[Section 2]{Be}, 
\cite[Section 2]{So1}, \cite[Chapter 6]{Hj1}, \cite[Section 3]{FS}, \cite[Sections 2--4]{So2}, \cite{Hj2}, \cite{Dr}, and, implicitly, 
\cite[Sections 3--5]{BDNT}. 
In this context, such sequences of topologies are often used to approximate an equivalence relation 
by coarser, but more manageable, ones. We relate our theorems on increasing interpolations between two topologies to this theme. Section~\ref{Su:res} contains
a more detailed summary of our results. 
The results of this paper are expected to have applications to a Scott-like analysis of quite general Borel equivalence relations but, since 
they concern a self-contained and, in a way, distinct topic, we decided to publish them separately.

\subsection{Basic notions and notation}

{\em Unless otherwise stated, all topologies are assumed to be defined on a fixed set $X$.} 

We write 
\[
{\rm cl}_\tau\;\hbox{ and }\;{\rm int}_\tau
\]
for the operations of closure and interior with respect to a topology $\tau$. 
If $\tau$ is a topology and $x\in X$, by a {\bf neighborhood of $x$} we understand a subset of $X$ that contains $x$ in its $\tau$-interior. 
A {\bf neighborhood basis of $\tau$} is a family $\mathcal A$ of subsets of $X$ such that for each $x\in X$ and each neighborhood $B$ of $x$, there 
exists $A\in {\mathcal A}$ that is a neighborhood of $x$ and $A\subseteq B$. So a neighborhood basis need not consist of open sets. 
A topology is called {\bf Baire} if a countable union of nowhere dense sets has dense complement.

Given a family of topologies $T$, we write 
\[
\bigvee T
\]
for the topology whose 
basis consist of sets of the form $U_0\cap \cdots \cap U_n$, where each $U_i$, $i\leq n$, is $\tau$-open for some $\tau\in T$. This is the smallest 
topology containing each topology in $T$. If $\tau_i$, for $i\in I$, are topologies, we write 
\[
\bigvee_{i\in I} \tau_i
\]
for $\bigvee T$, where $T=\{ \tau_i\mid  i\in I\}$. 

It is convenient to have the following piece of notation. For an ordinal $\alpha$, let 
\begin{equation}\label{E:oplu} 
\alpha\oplus 1 = 
\begin{cases} 
\alpha+1,&\text{ if $\alpha$ is a successor ordinal;}\\
\alpha, &\text{ if $\alpha$ is equal to $0$ or is a limit ordinal.}
\end{cases}
\end{equation} 
More uniformly, one can write, for all ordinals $\alpha$, 
\[
\alpha\oplus 1 = \sup \{ \xi+2\mid \xi<\alpha\}. 
\]

\subsection{Filtrations}
The notion of filtration defined below is the main new notion of the paper. 
Let $\sigma\subseteq \tau$ be topologies and let $\rho$ be an ordinal. A transfinite sequence $(\tau_\xi)_{\xi<\rho}$ 
of topologies is called a {\bf filtration from $\sigma$ to $\tau$} if
\begin{equation}\label{E:cot}
\sigma= \tau_0\subseteq \tau_1\subseteq \cdots \subseteq \tau_\xi \subseteq \cdots \subseteq \tau
\end{equation}
and, for each $\alpha<\rho$, if $F$ is $\tau_\xi$-closed for some $\xi<\alpha$, then 
\begin{equation}\label{E:intap2}
{\rm int}_{\tau_\alpha}(F) = {\rm int}_{\tau}(F). 
\end{equation}

We will write $(\tau_\xi)_{\xi\leq\rho}$ for $(\tau_\xi)_{\xi<\rho+1}$. 

Each filtration from $\sigma$ to $\tau$ as above can be extended to all ordinals by 
setting $\tau_\xi=\tau$ for all $\xi\geq \rho$. For this reason, it will be harmless to assume that a filtration 
is defined on all ordinals, which we sometimes do to make our notation lighter. On the other hand, a truncation of a filtration 
from $\sigma$ to $\tau$ is also a filtration from $\sigma$ to $\tau$, that is, if $(\tau_\xi)_{\xi<\rho}$ is such a filtration and $\rho'\leq \rho$, 
then so is $(\tau_\xi)_{\xi<\rho'}$. 

A filtration $(\tau_\xi)_{\xi<\rho}$  from $\sigma$ to $\tau$ is also a filtration from $\sigma$ to $\bigvee_{\xi<\rho}\tau_\xi$. 
In fact, if $\tau$ is not relevant to the consideration at hand, 
we call a transfinite sequence $(\tau_\xi)_{\xi<\rho}$ of topologies a {\bf filtration from $\sigma$} if it is a filtration from 
$\sigma$ to $\bigvee_{\xi<\rho}\tau_\xi$. It is easy to see that $(\tau_\xi)_{\xi<\rho}$ is a filtration from $\sigma$ precisely when, 
for each $\alpha<\rho$, $(\tau_\xi)_{\xi\leq\alpha}$ is a filtration from $\sigma$ to $\tau_\alpha$.

Note that if $F\subseteq X$ is an arbitrary set and $(\tau_\xi)_\xi$ is a transfinite sequence of topologies fulfilling \eqref{E:cot}, then for each $\alpha$
\[
{\rm int}_{\tau_\alpha}(F)\subseteq {\rm int}_\tau(F).
\] 
So condition \eqref{E:intap2} 
says that if $F$ is simple from the point of view of $\tau_\alpha$, that is, if $F$ is $\tau_\xi$-closed for some 
$\xi< \alpha$, then ${\rm int}_{\tau_\alpha}(F)$ is as large as possible, in fact, equal to ${\rm int}_\tau(F)$. 
One might say that if $F$ is $\tau_\xi$-closed for some $\xi< \alpha$, then $\tau_\alpha$ computes the interior of $F$ correctly, that is, as intended by $\tau$. 
In some results below, we will find it useful to consider a weakening of  \eqref{E:intap2} to \eqref{E:intap}.

\subsection{Results}\label{Su:res}
Let $\sigma\subseteq \tau$ be two topologies. 
The first question is to determine whether a given filtration $(\tau_\xi)_\xi$ from $\sigma$ to $\tau$ reaches $\tau$, that is, whether there exists an ordinal 
$\xi$ with $\tau_\xi=\tau$. 
Since all the topologies $\tau_\xi$ are defined on the same set, there exists an ordinal $\xi_0$ such that $\tau_\xi= \tau_{\xi_0}$ for all $\xi\geq \xi_0$; 
the question is whether $\tau_{\xi_0}=\tau$. 
If the answer happens to be positive, we aim to obtain information on 
the smallest ordinal $\xi$ for which $\tau_\xi=\tau$. We will achieve these goals in Sections~\ref{S:stbd} and \ref{S:stdes} 
(Corollary~\ref{C:tst}, Theorem~\ref{T:stab2}, and Corollary~\ref{C:stom}) 
assuming that $\tau$ is regular and Baire and that it has a neighborhood basis consisting 
of sets that are appropriately definable with respect to $\sigma$. So, informally speaking, termination at $\tau$ of a filtration from $\sigma$ to $\tau$ 
has to do with the attraction exerted by $\tau$, 
which is expressed by $\tau$ being Baire, and with the distance from $\sigma$ to $\tau$, which is expressed by the complexity, with 
respect to $\sigma$, of a neighborhood basis of $\tau$. 

Given an equivalence relation $E$ on a set $X$, with $X$ equipped with a topology $\tau$, we can define a canonical equivalence relation 
that approximates $E$ from above: make $x,y\in X$ equivalent 
when the $\tau$-closures of the $E$ equivalence classes of $x$ and $y$ are equal. Given a filtration, this procedure gives rise 
to a transfinite sequence of upper approximations of $E$. In Section~\ref{S:eqr}, we consider the question 
of these approximations stabilizing to $E$. We answer it in Theorem~\ref{T:eqte} and 
Corollary~\ref{C:ceq}. 

We also present and study a canonical, slowest filtration from $\sigma$ to $\tau$; see Section~\ref{S:slf}.

\section{The slowest filtration}\label{S:slf}

We introduce an operation on pairs of topologies, which will let us define filtrations. Let $\sigma$ and $\tau$ be topologies. Let 
\begin{equation}\label{E:sit}
(\sigma,\tau)
\end{equation}
be the family of all unions of sets of the form 
\[
U\cap {\rm int}_\tau(F), 
\]
where $U$ is $\sigma$-open and $F$ is $\sigma$-closed. Since 
\[
{\rm int}_\tau(F_1\cap F_2) = {\rm int}_\tau(F_1)\cap {\rm int}_\tau(F_2),
\]
it follows that $(\sigma, \tau)$ is a topology.

We record the following obvious lemma. 

\begin{lemma}\label{L:opb} 
Let $\sigma\subseteq \tau$ be topologies. 
\begin{enumerate}
\item[(i)] We have $\sigma\subseteq (\sigma,\tau)\subseteq \tau$.

\item[(ii)] If $(\tau_\xi)_\xi$ be a filtration from $\sigma$ to $\tau$, then $\tau_\xi\subseteq (\tau_\xi,\tau)\subseteq \tau_{\xi+1}$, for each $\xi$.
\end{enumerate}
\end{lemma}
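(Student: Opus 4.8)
The plan is to treat the two parts separately, with part (ii) reducing largely to part (i) together with a single application of the defining property \eqref{E:intap2} of a filtration.

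For part (i), I would prove the two inclusions by inspecting the basic sets $U\cap {\rm int}_\tau(F)$ that generate $(\sigma,\tau)$. For $\sigma\subseteq(\sigma,\tau)$, given a $\sigma$-open set $U$, I would take $F=X$, which is $\sigma$-closed; since ${\rm int}_\tau(X)=X$, we get $U=U\cap{\rm int}_\tau(X)$, exhibiting $U$ as a basic set of $(\sigma,\tau)$. For $(\sigma,\tau)\subseteq\tau$, I would note that each generating set $U\cap{\rm int}_\tau(F)$ is $\tau$-open: $U$ is $\sigma$-open, hence $\tau$-open as $\sigma\subseteq\tau$, while ${\rm int}_\tau(F)$ is $\tau$-open by definition; their intersection, and any union of such intersections, is therefore $\tau$-open.

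For part (ii), fix $\xi$. The first inclusion $\tau_\xi\subseteq(\tau_\xi,\tau)$ is just part (i) applied to the pair $\tau_\xi\subseteq\tau$, the inclusion $\tau_\xi\subseteq\tau$ being guaranteed by \eqref{E:cot}. For the second inclusion $(\tau_\xi,\tau)\subseteq\tau_{\xi+1}$, I would again look at a generating set $U\cap{\rm int}_\tau(F)$, now with $U$ being $\tau_\xi$-open and $F$ being $\tau_\xi$-closed. The key step is to apply the filtration condition \eqref{E:intap2} at the ordinal $\alpha=\xi+1$: since $F$ is $\tau_\xi$-closed and $\xi<\xi+1$, we obtain ${\rm int}_{\tau_{\xi+1}}(F)={\rm int}_\tau(F)$. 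Thus ${\rm int}_\tau(F)$ is $\tau_{\xi+1}$-open; as $U$ is $\tau_\xi$-open and hence $\tau_{\xi+1}$-open, the set $U\cap{\rm int}_\tau(F)$, and any union of such sets, is $\tau_{\xi+1}$-open, giving $(\tau_\xi,\tau)\subseteq\tau_{\xi+1}$.

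There is no real obstacle here; the lemma is elementary, and the only point requiring attention is to invoke \eqref{E:intap2} at exactly $\alpha=\xi+1$, which is the smallest ordinal for which the $\tau_\xi$-closed set $F$ counts as ``simple,'' so that the filtration is obligated to compute its interior as ${\rm int}_\tau(F)$.
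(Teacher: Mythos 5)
Your proof is correct and is precisely the routine verification the paper has in mind when it records this lemma as ``obvious'' without proof: taking $F=X$ for $\sigma\subseteq(\sigma,\tau)$, observing that the generating sets $U\cap{\rm int}_\tau(F)$ are $\tau$-open for $(\sigma,\tau)\subseteq\tau$, and invoking \eqref{E:intap2} at $\alpha=\xi+1$ for part (ii). Your closing remark correctly identifies the one point of substance, and note that applying \eqref{E:intap2} at $\xi+1$ is legitimate for every $\xi$ because the paper's convention of extending a filtration by $\tau_\eta=\tau$ for $\eta\geq\rho$ keeps the condition available at all successor stages.
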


Let $\sigma$ and $\tau$ be two topologies with $\sigma\subseteq\tau$. Lemma~\ref{L:opb} suggests defining a filtration from $\sigma$ to $\tau$ that would be 
the slowest such filtration; see Proposition~\ref{P:slo} below.
This goal will be achieved by extending operation \eqref{E:sit} to a transfinite sequence of topologies. 
So we define by transfinite recursion topologies $(\sigma,\tau)_\xi$, where $\xi$ is an ordinal. (We will have $(\sigma,\tau)_1=(\sigma,\tau)$.) 
Let 
\[
(\sigma, \tau)_0=\sigma. 
\]
If $(\sigma,\tau)_\xi$ has been defined, let 
\[
(\sigma, \tau)_{\xi+1} = ((\sigma,\tau)_\xi, \tau).
\]
If $\lambda$ is a limit ordinal and $(\sigma,\tau)_\xi$ have been defined for all $\xi<\lambda$, then 
\[
(\sigma, \tau)_\lambda=\bigvee_{\xi<\lambda}(\sigma, \tau)_\xi.
\] 

Note that the definition above can be phrased as follows. Given an ordinal $\xi$, 
if $(\sigma, \tau)_\gamma$ are defined for all $\gamma<\xi$, then $(\sigma, \tau)_\xi$ is the family of all unions of sets of the form 
\[
U\cap {\rm int}_\tau(F)
\]
where, for some $\gamma<\xi$,  $U$ is $(\sigma, \tau)_\gamma$-open and $F$ is $(\sigma, \tau)_\gamma$-closed.

Proposition~\ref{P:slo} justifies regarding $((\sigma,\tau)_\xi)_\xi$ as the slowest filtration from $\sigma$ to $\tau$. On the opposite end, 
the transfinite sequence $(\tau_\xi)_\xi$ 
with $\tau_0=\sigma$ and $\tau_\xi = \tau$ for $\xi>0$ is trivially the fastest such filtration. 

\begin{proposition}\label{P:slo} 
Let $\sigma\subseteq \tau$ be topologies. 
\begin{enumerate}
\item[(i)] The transfinite sequence $((\sigma,\tau)_\xi)_\xi$ is a filtration from $\sigma$ to $\tau$. 

\item[(ii)] If $(\tau_\xi)_\xi$ is a filtration from $\sigma$ to $\tau$, then $(\sigma,\tau)_\xi\subseteq \tau_\xi$, for each ordinal $\xi$. 
\end{enumerate}
\end{proposition}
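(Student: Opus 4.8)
The plan is to establish both parts by transfinite induction on $\xi$, relying throughout on Lemma~\ref{L:opb} and on the (easy, unstated) monotonicity of the operation $(\,\cdot\,,\tau)$ in its first coordinate: if $\sigma_1\subseteq\sigma_2$ then $(\sigma_1,\tau)\subseteq(\sigma_2,\tau)$, since every generating set $U\cap{\rm int}_\tau(F)$ with $U$ being $\sigma_1$-open and $F$ being $\sigma_1$-closed is also $\sigma_2$-open, respectively $\sigma_2$-closed, hence a generator for $(\sigma_2,\tau)$.

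For part~(i), I would first verify the chain \eqref{E:cot}. By induction on $\xi$ I would show simultaneously that $(\sigma,\tau)_\xi\subseteq\tau$ and that the sequence is increasing. The base case is $(\sigma,\tau)_0=\sigma\subseteq\tau$. At a successor, granting $(\sigma,\tau)_\xi\subseteq\tau$, Lemma~\ref{L:opb}(i) applied to the pair $(\sigma,\tau)_\xi\subseteq\tau$ yields $(\sigma,\tau)_\xi\subseteq((\sigma,\tau)_\xi,\tau)=(\sigma,\tau)_{\xi+1}\subseteq\tau$. At a limit $\lambda$, the topology $(\sigma,\tau)_\lambda=\bigvee_{\xi<\lambda}(\sigma,\tau)_\xi$ refines each earlier $(\sigma,\tau)_\eta$ and is refined by $\tau$, because $\bigvee$ is the least topology refining all the $(\sigma,\tau)_\xi$ and $\tau$ refines each of them by the inductive hypothesis.

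Still for part~(i), I would verify \eqref{E:intap2}. Fix $\alpha$ and suppose $F$ is $(\sigma,\tau)_\xi$-closed for some $\xi<\alpha$. Since $(\sigma,\tau)_\alpha\subseteq\tau$, the inclusion ${\rm int}_{(\sigma,\tau)_\alpha}(F)\subseteq{\rm int}_\tau(F)$ is automatic, so only the reverse inclusion is at stake. The key observation is that ${\rm int}_\tau(F)$ is literally one of the generators of $(\sigma,\tau)_{\xi+1}$: taking $U=X$, which is $(\sigma,\tau)_\xi$-open, the set $X\cap{\rm int}_\tau(F)={\rm int}_\tau(F)$ is $(\sigma,\tau)_{\xi+1}$-open. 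As $\xi<\alpha$ forces $\xi+1\leq\alpha$, monotonicity of the sequence makes ${\rm int}_\tau(F)$ a $(\sigma,\tau)_\alpha$-open subset of $F$, hence ${\rm int}_\tau(F)\subseteq{\rm int}_{(\sigma,\tau)_\alpha}(F)$, giving equality. This establishes~(i).

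For part~(ii), let $(\tau_\xi)_\xi$ be a filtration from $\sigma$ to $\tau$; I would prove $(\sigma,\tau)_\xi\subseteq\tau_\xi$ by induction on $\xi$. The base case is the equality $(\sigma,\tau)_0=\sigma=\tau_0$. At a successor, granting $(\sigma,\tau)_\xi\subseteq\tau_\xi$, monotonicity of the operation together with Lemma~\ref{L:opb}(ii) gives $(\sigma,\tau)_{\xi+1}=((\sigma,\tau)_\xi,\tau)\subseteq(\tau_\xi,\tau)\subseteq\tau_{\xi+1}$. At a limit $\lambda$, the inductive hypotheses $(\sigma,\tau)_\xi\subseteq\tau_\xi\subseteq\tau_\lambda$ (the last inclusion from \eqref{E:cot} for the filtration $(\tau_\xi)_\xi$) show that $\tau_\lambda$ refines every $(\sigma,\tau)_\xi$ with $\xi<\lambda$, so by minimality of the join $(\sigma,\tau)_\lambda=\bigvee_{\xi<\lambda}(\sigma,\tau)_\xi\subseteq\tau_\lambda$. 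The argument is essentially bookkeeping; there is no real obstacle, the one genuinely substantive point being the generator observation in part~(i)—that closedness at stage $\xi$ makes the correct $\tau$-interior available already at stage $\xi+1$—while the only fact used beyond the excerpt, the monotonicity of $(\,\cdot\,,\tau)$, is immediate. The care required lies solely in treating the limit stages uniformly through the defining property of $\bigvee$.
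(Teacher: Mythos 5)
Your proposal is correct and takes essentially the same route as the paper: part (i) via Lemma~\ref{L:opb}(i) for the chain \eqref{E:cot} together with the observation that, taking $U=X$, the set ${\rm int}_\tau(F)$ is itself a generator of $(\sigma,\tau)_{\xi+1}$ (this is exactly what the paper means by ``clear from the very definition''), and part (ii) by the identical transfinite induction using monotonicity of $(\,\cdot\,,\tau)$ with Lemma~\ref{L:opb}(ii) at successors and minimality of $\bigvee$ at limits. You merely spell out details the paper leaves implicit, such as the limit-stage bookkeeping and the (correctly justified) monotonicity of the operation in its first coordinate.
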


\begin{proof} Immediately from Lemma~\ref{L:opb}(i), we get 
\[
\sigma= (\sigma,\tau)_0\subseteq (\sigma,\tau)_1\subseteq\cdots  \subseteq (\sigma,\tau)_{\xi}\subseteq\cdots \subseteq \tau.
\]
It is also clear from the very definition that, for each $\alpha$, if $F$ is $(\sigma,\tau)_\xi$-closed for some $\xi<\alpha$, then 
\[
{\rm int}_{(\sigma,\tau)_\alpha}(F) = {\rm int}_\tau(F),
\]
that is, we have point (i).  

Point (ii) is obtained by transfinite induction. Clearly, we have $(\sigma,\tau)_0 = \sigma= \tau_0$. Assuming inductively that 
$(\sigma,\tau)_\xi\subseteq \tau_\xi$ and using Lemma~\ref{L:opb}(ii), we get 
\[
(\sigma, \tau)_{\xi+1} = ((\sigma,\tau)_\xi, \tau)\subseteq (\tau_\xi, \tau)\subseteq \tau_{\xi+1},
\]
as required. If $\lambda$ is a limit ordinal and  if, inductively, $(\sigma,\tau)_\xi\subseteq \tau_\xi$ 
for all $\xi<\lambda$, then $\bigcup_{\xi<\lambda}(\sigma, \tau)_\xi \subseteq \tau_\lambda$ and, therefore, $(\sigma, \tau)_\lambda\subseteq \tau_\lambda$. 
The conclusion follows. 
\end{proof}

\section{Stabilization at $\tau$}\label{S:stbd}

Theorem~\ref{T:sts} should be seen in the context of Lemma~\ref{L:opb}(i). 

\begin{theorem}\label{T:sts}
Let $\sigma\subseteq \tau$ be topologies. 
Assume that $\tau$ is regular, Baire, and has a neighborhood basis consisting of sets with the Baire property with respect to $\sigma$. 
If  $\sigma=(\sigma,\tau)$, then $\sigma= \tau$. 
\end{theorem}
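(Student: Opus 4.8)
The plan is to prove $\tau\subseteq\sigma$, i.e.\ that every $\tau$-open set is $\sigma$-open. First I would unwind the hypothesis. By Lemma~\ref{L:opb}(i), $\sigma=(\sigma,\tau)$ amounts to $(\sigma,\tau)\subseteq\sigma$, and since $(\sigma,\tau)$ is generated by the sets $U\cap{\rm int}_\tau(F)$ with $U$ being $\sigma$-open and $F$ being $\sigma$-closed, this says exactly that ${\rm int}_\tau(F)$ is $\sigma$-open for every $\sigma$-closed $F$; dually, ${\rm cl}_\tau(U)$ is $\sigma$-closed for every $\sigma$-open $U$. The first consequence I would extract is that every $\sigma$-nowhere dense set is $\tau$-nowhere dense: if $N$ is $\sigma$-nowhere dense, then ${\rm int}_\tau({\rm cl}_\sigma N)$ is a $\sigma$-open subset of ${\rm cl}_\sigma N$, hence is contained in ${\rm int}_\sigma({\rm cl}_\sigma N)=\emptyset$, and since ${\rm cl}_\tau N\subseteq{\rm cl}_\sigma N$ this forces ${\rm int}_\tau({\rm cl}_\tau N)=\emptyset$. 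Consequently $\sigma$-meager sets are $\tau$-meager and, because a nonempty $\sigma$-open set is a nonempty $\tau$-open set, $\tau$ being Baire yields that $\sigma$ is Baire too. The two facts that ${\rm cl}_\tau(U)$ is $\sigma$-closed for $\sigma$-open $U$ and that $\sigma$-meagerness implies $\tau$-meagerness are the engine of the argument.

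Next I would set up the local construction. Fix a $\tau$-open $V$ and a point $x\in V$. Using regularity of $\tau$, choose a $\tau$-open $W$ with $x\in W\subseteq{\rm cl}_\tau(W)\subseteq V$, and using the neighborhood basis hypothesis choose $A$ with the Baire property with respect to $\sigma$ such that $x\in{\rm int}_\tau(A)$ and $A\subseteq W$. Let $U$ be $\sigma$-open with $A\triangle U$ $\sigma$-meager, say $A\triangle U\subseteq\bigcup_n F_n$ with each $F_n$ being $\sigma$-closed and $\sigma$-nowhere dense, hence $\tau$-nowhere dense by the previous paragraph. Writing $B={\rm int}_\tau(A)$, the set $B\setminus U$ is $\tau$-meager, so by the Baire property of $\tau$ the set $(B\cap U)\setminus\bigcup_n F_n$ is $\tau$-comeager in the nonempty $\tau$-open set $B$; in particular $x\in{\rm cl}_\tau\big(U\setminus\bigcup_n F_n\big)$. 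Since $U\setminus\bigcup_n F_n\subseteq A\subseteq W$, this closure lies inside ${\rm cl}_\tau(W)\subseteq V$, while also lying inside the $\sigma$-closed set ${\rm cl}_\tau(U)$. Trimming $U$ by the $\tau$-nowhere dense $\sigma$-closed sets $F_n$ is precisely what keeps the relevant closure inside $V$.

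A natural $\sigma$-open candidate to place around $x$ is ${\rm int}_\sigma({\rm cl}_\tau(U))$, which is $\sigma$-open by the reformulation and contains the nonempty set $U$. Two difficulties remain, and this is where I expect the real work to lie. First, one must keep the part of this set near $x$ inside $V$; the trimming just described, together with $\tau$-Baireness, is designed for that. Second, and more seriously, one must pass from $x\in{\rm cl}_\tau(U)$ to $x\in{\rm int}_\sigma({\rm cl}_\tau(U))$: a priori $x$ may lie on the $\sigma$-boundary of ${\rm cl}_\tau(U)$, so the pointwise construction alone need not yield a $\sigma$-open neighborhood of $x$. To defeat this boundary phenomenon I would pass to a global argument, aiming first to show that the $\sigma$-open sets form a $\pi$-base for $\tau$, so that ${\rm int}_\sigma(V)$ is $\tau$-dense in each $\tau$-open $V$ and the discrepancy $V\setminus{\rm int}_\sigma(V)$ is $\tau$-nowhere dense. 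Writing $C=X\setminus V$, this discrepancy equals $({\rm cl}_\sigma C)\setminus C$; I would then show it vanishes by feeding the $\sigma$-closed set ${\rm cl}_\sigma C$ into the reformulated hypothesis, so that ${\rm int}_\tau({\rm cl}_\sigma C)$ is $\sigma$-open, and combining this with the $\tau$-nowhere density of the discrepancy and the regularity of $\tau$.

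In short, the reformulation of $(\sigma,\tau)=\sigma$, the transfer of nowhere density and meagerness from $\sigma$ to $\tau$, and the Baire-category extraction of a $\sigma$-open set sitting $\tau$-densely in a $\tau$-open set should be routine. The delicate point, where regularity of $\tau$ and the identity ``${\rm cl}_\tau$ of a $\sigma$-open set is $\sigma$-closed'' must be used together, is the rigidity step turning the $\tau$-nowhere dense discrepancy $({\rm cl}_\sigma C)\setminus C$ into the empty set, equivalently showing that each $\tau$-open set is not merely $\tau$-densely but genuinely $\sigma$-open.
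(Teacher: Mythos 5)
Your preliminary reductions are correct and run parallel to the first part of the paper's own proof: the reformulation of $\sigma=(\sigma,\tau)$ (${\rm int}_\tau(F)$ is $\sigma$-open for every $\sigma$-closed $F$, dually ${\rm cl}_\tau(U)$ is $\sigma$-closed for every $\sigma$-open $U$), the transfer of nowhere density and meagerness from $\sigma$ to $\tau$, the Baireness of $\sigma$, and the Baire-property extraction showing that near $x$ a basic $\tau$-neighborhood agrees with a $\sigma$-open set up to the $\sigma$-meager (hence $\tau$-meager) set $\bigcup_n F_n$. But there is a genuine gap exactly at the point you flag as ``where I expect the real work to lie'': neither sub-claim of your global rigidity plan is proved, and neither follows from what precedes. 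For the $\pi$-base claim, your construction yields a $\sigma$-open $U$ with $U\setminus V\subseteq\bigcup_n F_n$, i.e.\ $U\setminus V$ is $\sigma$-meager --- but this does not produce a nonempty $\sigma$-open subset of $V$: the trimmed set $U\setminus\bigcup_n F_n$ is not $\sigma$-open (you can delete one $\sigma$-closed $F_n$ at a time, not all countably many), and a $\sigma$-meager set such as $U\setminus V$ can perfectly well be $\sigma$-dense in $U$. For the vanishing step, feeding ${\rm cl}_\sigma C$ into the hypothesis, together with the $\tau$-nowhere density of the discrepancy, gives only ${\rm int}_\tau({\rm cl}_\sigma C)={\rm int}_\tau(C)$, so that ${\rm int}_\tau(C)$ is $\sigma$-open --- equivalently, ${\rm cl}_\tau(V)$ is $\sigma$-closed for every $\tau$-open $V$. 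That restates the boundary problem rather than solving it: knowing $x\in W\subseteq{\rm cl}_\tau(W)\subseteq V$ with ${\rm cl}_\tau(W)$ $\sigma$-closed still does not place $x$ in ${\rm int}_\sigma({\rm cl}_\tau(W))$, and regularity alone cannot force it.

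The missing idea is the category-and-disjointness argument that the paper isolates as Lemma~\ref{L:toe} and applies to ${\rm id}_X\colon (X,\tau)\to (X,\sigma)$. From your extraction one gets that every $\tau$-neighborhood of a point is $\sigma$-comeager in a $\sigma$-neighborhood of that point; hence, since $\sigma$ is Baire, every nonempty $\tau$-open set is $\sigma$-nonmeager in each $\sigma$-neighborhood of each of its points. Now fix $x\in W$ with ${\rm cl}_\tau(W)\subseteq V$ and suppose $x\in{\rm cl}_\sigma\bigl(X\setminus{\rm cl}_\tau(W)\bigr)$. The set $X\setminus{\rm cl}_\tau(W)$ is $\tau$-open, so it is $\sigma$-nonmeager in every $\sigma$-open set containing one of its points; since it accumulates at $x$, it is $\sigma$-nonmeager in every $\sigma$-neighborhood of $x$. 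But it is disjoint from $W$, which is $\sigma$-comeager in some $\sigma$-neighborhood of $x$ --- a contradiction, as $\sigma$ is Baire. Hence $x\in{\rm int}_\sigma({\rm cl}_\tau(W))\subseteq{\rm int}_\sigma(V)$, and $V$ is $\sigma$-open. This interplay between comeagerness of the set and nonmeagerness of its disjoint complement is the step your plan lacks; everything before it in your write-up is sound and matches the paper's three claims.
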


We start with a general lemma that will be used here and later on to check equality of two topologies.

\begin{lemma}\label{L:toe}
Let $Z$ be a regular topological space, and let $Y$ be a Baire space. 
Let $f\colon Z\to Y$ be a continuous bijection. Assume that, for each $z\in Z$ and a non-empty open $z\in U\subseteq Z$, 
$f(U)$ is comeager in a neighborhood of $f(z)$. Then $f$ is a homeomorphism.  
\end{lemma}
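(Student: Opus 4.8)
The plan is to prove that $f$ is an open map; since $f$ is a continuous bijection, openness is precisely what remains to conclude that $f$ is a homeomorphism (equivalently, that $f^{-1}$ is continuous). So I fix a non-empty open set $U\subseteq Z$ and a point $y\in f(U)$, write $y=f(z)$ with $z\in U$, and aim to produce an open neighborhood of $y$ contained in $f(U)$. Using regularity of $Z$, I first shrink $U$: I choose an open $U_1$ with $z\in U_1\subseteq \overline{U_1}\subseteq U$, where $\overline{U_1}$ denotes the $Z$-closure. It then suffices to place a neighborhood of $y$ inside $f(\overline{U_1})$, since $\overline{U_1}\subseteq U$ gives $f(\overline{U_1})\subseteq f(U)$.

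The main reduction I would use leverages that $f$ is a \emph{bijection}: because $\overline{U_1}$ and $Z\setminus\overline{U_1}$ partition $Z$, their images partition $Y$, so $Y\setminus f(\overline{U_1})=f(Z\setminus\overline{U_1})$. Hence finding an open $V\ni y$ with $V\subseteq f(\overline{U_1})$ is the same as finding an open $V\ni y$ with
\[
V\cap f(Z\setminus\overline{U_1})=\emptyset .
\]
Applying the hypothesis to $z$ and the open set $U_1$, I obtain an open $V\ni y$ in which $f(U_1)$ is comeager, and I claim this $V$ works. Indeed, since $f(Z\setminus\overline{U_1})\subseteq Y\setminus f(U_1)$ and $V\setminus f(U_1)$ is meager, the set $V\cap f(Z\setminus\overline{U_1})$ is meager in $Y$.

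The crux — and the step I expect to be the main obstacle — is upgrading this meagerness to emptiness, and it is here that the Baire property of $Y$ and the full strength of the comeagerness assumption come together. Suppose toward a contradiction that some $y'$ lies in $V\cap f(Z\setminus\overline{U_1})$, say $y'=f(w)$ with $w\in Z\setminus\overline{U_1}$. As $Z\setminus\overline{U_1}$ is open, the hypothesis applied to $w$ gives an open neighborhood of $y'$ in which $f(Z\setminus\overline{U_1})$ is comeager; intersecting it with $V$ yields a non-empty open set $N\ni y'$ in which $f(Z\setminus\overline{U_1})$ is simultaneously comeager and, by the previous paragraph, meager. Then $N$ itself is meager, contradicting the fact that in a Baire space every non-empty open set is non-meager. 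This forces $V\cap f(Z\setminus\overline{U_1})=\emptyset$, whence $V\subseteq f(\overline{U_1})\subseteq f(U)$, so $y$ is interior to $f(U)$. Since $y$ and $U$ were arbitrary, $f$ is open, hence a homeomorphism. The only routine points left to check are the shrinking step furnished by regularity, the standard equivalence that a space is Baire exactly when its non-empty open sets are non-meager, and the harmless remark that meagerness relative to an open set coincides with meagerness in $Y$.
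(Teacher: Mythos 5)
Your proof is correct and follows essentially the same route as the paper's: shrink $U$ by regularity, use bijectivity to identify $Y\setminus f(\overline{U_1})$ with $f(Z\setminus\overline{U_1})$, and derive a Baire-category contradiction from two disjoint (by injectivity) sets each comeager in a common non-empty open set. The only difference is cosmetic --- you localize the contradiction at a hypothetical point $y'$ of $V\cap f(Z\setminus\overline{U_1})$ to upgrade meagerness to emptiness, whereas the paper localizes it at $f(z)$ by showing $f(Z\setminus\mathrm{cl}_Z(U))$ would be non-meager in every neighborhood of $f(z)$.
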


\begin{proof} We write ${\rm cl}_Z$ for closure in $Z$. 

We show that, for each $z\in U\subseteq Z$, with $U$ open, 
$f({\rm cl}_Z(U))$ contains $f(z)$ in its interior. If not, then, by surjectivity of $f$, 
$f(Z\setminus {\rm cl}_Z(U))$ has $f(z)$ in its closure. Since $Z\setminus {\rm cl}_Z(U)$ is open, we have that $f(Z\setminus {\rm cl}_Z(U))$ is non-meager in each 
neighborhood of each of its points. Since each neighborhood of $z$ contains a point in $f(Z\setminus {\rm cl}_Z(U))$, it follows that $f(Z\setminus {\rm cl}_Z(U))$ 
is non-meager in each neighborhood of $f(z)$. By injectivity of $f$ and $Y$ being Baire, this statement contradicts $f(U)$ being comeager in a neighborhood of $f(z)$.

Now we finish the proof by noticing that, by regularity of $Z$, for each $U\subseteq Z$ open we have 
\[
U = \bigcup_{z\in U} {\rm cl}_Z(U_z)
\]
for some open sets $U_x$ with $z\in U_z$. Thus, 
\[
f(U) = \bigcup_{z\in U} f({\rm cl}_Z(U_z))
\]
and, by what was proved above, $f({\rm cl}_Z(U_z))$ contains $f(z)$ in its interior. Thus, $f(U)$ is open, and the lemma follows. 
\end{proof}

\begin{proof}[Proof of Theorem~\ref{T:sts}]
First, we claim that each non-empty $\tau$-open set is non-meager with respect to $\sigma$. 
Let $V$ be non-empty and $\tau$-open, and, towards a contradiction, assume that we have closed and nowhere dense sets with respect to $\sigma$ sets 
$F_n$, $n\in {\mathbb N}$, such that $\bigcup_nF_n\supseteq V$.
Then ${\rm int}_\tau(\bigcup_n F_n)\not= \emptyset$. Since $\tau$ is Baire and each $F_n$ is also $\tau$-closed, 
it follows that ${\rm int}_\tau(F_{n_0})\not= \emptyset$, for some $n_0$. 
Since $F_{n_0}$ is $\sigma$-closed, we have that ${\rm int}_\tau(F_{n_0})$ 
is $(\sigma,\tau)$-open, so 
since $(\sigma,\tau) =\sigma$, it is $\sigma$-open. Thus, ${\rm int}_\sigma(F_{n_0})\not= \emptyset$ contradicting the assumption on 
the sequence $(F_n)$. 

Our second claim is that for each $x\in X$, each $\tau$-neighborhood of $x$ is $\sigma$-dense in a $\sigma$-neighborhood of $x$. 
Indeed, let $V$ be a $\tau$-open set containing $x$. Then ${\rm cl}_\sigma(V)$ is $\sigma$-closed and, therefore, ${\rm int}_\tau({\rm cl}_\sigma(V))$ is 
$(\sigma,\tau)$-open and so $\sigma$-open since $(\sigma,\tau)= \sigma$. We clearly have 
\[
x\in V\subseteq {\rm int}_\tau({\rm cl}_\sigma(V))
\]
and $V$ is $\sigma$-dense in ${\rm int}_\tau({\rm cl}_\sigma(V))$. It follows that ${\rm int}_\tau({\rm cl}_\sigma(V))$ is a $\sigma$-neighborhood of $x$, in which 
$V$ is $\sigma$-dense. 

Thirdly, we observe that, by assumption, each $x\in X$ has a $\tau$-neighborhood basis consisting of sets that have the Baire property with respect to $\sigma$. 

It follows immediately, from the three claims above, that for each $x\in X$, each $\tau$-neighborhood of $x$ is $\sigma$-comeager in a $\sigma$-neighborhood 
of $x$. The first claim also implies that the topology $\sigma$ is Baire.

The above observation implies the conclusion of the theorem by Lemma~\ref{L:toe} applied to ${\rm id}_X\colon (X, \tau)\to (X, \sigma)$. 
\end{proof}

If $(\tau_\xi)_\xi$ is a filtration from $\sigma$ to $\tau$, an intuition behind condition \eqref{E:intap2} is that it 
tries to ensure that $\tau_{\xi+1}$ is substantially closer to $\tau$ than $\tau_\xi$, unless $\tau_\xi$ is already equal to $\tau$. 
Corollary~\ref{C:tst}(ii) below resonates with this intuition. Proposition~\ref{P:slo}(ii) suggests regarding the smallest $\xi$ as in 
the conclusion of Corollary~\ref{C:tst}(ii) as an ordinal valued ``distance"  from $\sigma$ to $\tau$. 

Recall that {\bf C-sets} with respect to a topology is the smallest $\sigma$-algebra of sets closed under the Souslin operation and containing all 
open sets with respect to this topology; see \cite[Section~29D]{Ke}. 
The main point for us is that C-sets have the Baire property even if the given topology is strengthened; see \cite[Corollary~29.14]{Ke}. 

\begin{corollary}\label{C:tst}
Let $\sigma\subseteq \tau$ be topologies. 
Assume that $\tau$ is regular, Baire, and has a neighborhood basis consisting of sets that are C-sets with respect to $\sigma$. 
\begin{enumerate}
\item[(i)] Let $(\tau_\xi)_\xi$ be a filtration from $\sigma$ to $\tau$. If $\tau_{\xi_0} =\tau_{\xi_0+1}$, then $\tau_{\xi_0}= \tau$. 

\item[(ii)] There exists an ordinal $\xi$ such that $(\sigma,\tau)_\xi=\tau$. 
\end{enumerate}
\end{corollary}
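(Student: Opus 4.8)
The plan is to derive both parts from Theorem~\ref{T:sts}, applied not to the original pair $\sigma\subseteq\tau$ but to the shifted pair $\tau_{\xi_0}\subseteq\tau$, together with the stabilization fact for increasing transfinite sequences of topologies on a fixed set.

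For part (i), I would first translate the fixed-point hypothesis into a statement about the operation $(\,\cdot\,,\tau)$. By Lemma~\ref{L:opb}(ii) we have $\tau_{\xi_0}\subseteq(\tau_{\xi_0},\tau)\subseteq\tau_{\xi_0+1}$, so the assumption $\tau_{\xi_0}=\tau_{\xi_0+1}$ forces $(\tau_{\xi_0},\tau)=\tau_{\xi_0}$. This is exactly the hypothesis $\sigma=(\sigma,\tau)$ of Theorem~\ref{T:sts} with $\tau_{\xi_0}$ playing the role of $\sigma$. It then remains to verify the remaining hypotheses of Theorem~\ref{T:sts} for the pair $\tau_{\xi_0}\subseteq\tau$: regularity and the Baire property of $\tau$ hold by assumption and are unaffected by the shift, so the only real point is to check that $\tau$ has a neighborhood basis consisting of sets with the Baire property \emph{with respect to $\tau_{\xi_0}$}. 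This is where the C-set hypothesis enters: the given neighborhood basis of $\tau$ consists of sets that are C-sets with respect to $\sigma$, and since $\sigma\subseteq\tau_{\xi_0}$, the cited fact that C-sets retain the Baire property when the topology is strengthened guarantees that these sets have the Baire property with respect to $\tau_{\xi_0}$. Theorem~\ref{T:sts}, applied to $\tau_{\xi_0}\subseteq\tau$, then yields $\tau_{\xi_0}=\tau$.

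For part (ii), I would apply part (i) to the slowest filtration. By Proposition~\ref{P:slo}(i), the sequence $((\sigma,\tau)_\xi)_\xi$ is a filtration from $\sigma$ to $\tau$. Since it is increasing and each $(\sigma,\tau)_\xi$ is a subfamily of the fixed power set of $X$, the sequence must stabilize, so there is an ordinal $\xi_0$ with $(\sigma,\tau)_{\xi_0}=(\sigma,\tau)_{\xi_0+1}$. Part (i), applied to this filtration, gives $(\sigma,\tau)_{\xi_0}=\tau$, as desired.

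The whole argument is short once one spots that the right move is to rerun Theorem~\ref{T:sts} with the base topology shifted from $\sigma$ up to $\tau_{\xi_0}$. I expect the only genuinely non-formal step to be confirming that the neighborhood-basis hypothesis survives this shift, which is precisely the purpose of the C-set assumption: being a C-set with respect to $\sigma$ is a robust enough form of definability that the Baire property persists with respect to the larger topology $\tau_{\xi_0}$, whereas mere Baire property with respect to $\sigma$ would in general be destroyed by passing to a finer topology. This is exactly why the corollary strengthens the neighborhood-basis hypothesis of Theorem~\ref{T:sts} from ``Baire property with respect to $\sigma$'' to ``C-set with respect to $\sigma$''.
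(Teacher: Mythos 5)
Your proposal is correct and matches the paper's own proof essentially verbatim: for (i) you extract $(\tau_{\xi_0},\tau)=\tau_{\xi_0}$ from Lemma~\ref{L:opb}(ii) and apply Theorem~\ref{T:sts} to the shifted pair $\tau_{\xi_0}\subseteq\tau$, with the C-set hypothesis supplying, exactly as in the paper, the Baire property of the neighborhood basis with respect to the finer topology $\tau_{\xi_0}$. For (ii) you likewise follow the paper, combining Proposition~\ref{P:slo}(i) with the automatic stabilization of an increasing transfinite sequence of topologies on a fixed set and then invoking (i).
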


\begin{proof} (i) Let $\xi$ be such that $\tau_\xi=\tau_{\xi+1}$. This equality and Lemma~\ref{L:opb}(ii) give $\tau_\xi=(\tau_\xi, \tau)$. 
Now the conclusion follows from Theorem~\ref{T:sts} if we only notice that C-sets with respect to $\sigma$ are also C-sets 
with respect to $\tau_\xi$ since $\sigma=\tau_0\subseteq\tau_\xi$ and, therefore, they have the Baire property with respect to $\tau_\xi$. 

(ii) Since the topologies $(\sigma,\tau)_\xi$ are defined on the same set $X$ for all ordinals $\xi$, there exists an ordinal $\xi$ such that 
$(\sigma,\tau)_\xi = (\sigma,\tau)_{\xi+1}$, and (ii) follows from (i). 
\end{proof}

\section{Stabilization at $\tau$ and descriptive set theoretic complexity}\label{S:stdes}

We prove here a more refined version of stabilization. Theorem~\ref{T:stab2} makes a connection with descriptive set theoretic complexity of neighborhood bases. 
Note that the assumptions of Theorem~\ref{T:stab2} ensure that Corollary~\ref{C:tst}(i) applies, 
but the conclusion of Theorem~\ref{T:stab2} gives an upper estimate on the smallest $\xi_0$ with $\tau_{\xi_0}=\tau$, which we do not get from Corollary~\ref{C:tst}(i).

\begin{theorem}\label{T:stab2}
Let $\sigma\subseteq \tau$ be topologies, 
with $\tau$ being regular and Baire. For an ordinal $\alpha\leq \omega_1$, 
let $(\tau_\xi)_{\xi\leq\alpha}$ be a filtration from $\sigma$ to $\tau$, with $\tau_\xi$ metrizable, for $\xi<\alpha$, and $\tau_\alpha$ Baire. 

If $\tau$ has a neighborhood basis consisting of sets in $\bigcup_{\xi<\alpha}{\mathbf \Pi}^0_{1+\xi}$ with respect to $\sigma$, then 
$\tau_\alpha=\tau$. 
\end{theorem}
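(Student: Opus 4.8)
The plan is to deduce $\tau_\alpha=\tau$ from Lemma~\ref{L:toe} applied to the identity map ${\rm id}\colon (X,\tau)\to (X,\tau_\alpha)$. This is a continuous bijection because $\tau_\alpha\subseteq\tau$, the domain $(X,\tau)$ is regular, and the range $(X,\tau_\alpha)$ is Baire, all by hypothesis. Thus it suffices to verify the comeagerness condition of Lemma~\ref{L:toe}: for every $z\in X$ and every $\tau$-open $U\ni z$, the set $U$ is $\tau_\alpha$-comeager in a $\tau_\alpha$-neighborhood of $z$. Using the assumed neighborhood basis of $\tau$, I would fix a basic $\tau$-neighborhood $A\subseteq U$ of $z$ with $A\in{\mathbf \Pi}^0_{1+\xi}$ with respect to $\sigma$ for some $\xi<\alpha$; since $A\subseteq U$, it is enough to show that such an $A$ is $\tau_\alpha$-comeager in a $\tau_\alpha$-neighborhood of $z$ whenever $z\in{\rm int}_\tau(A)$.

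The engine is a transfinite induction on $\xi<\alpha$ proving, for every set $A$ that is ${\mathbf \Pi}^0_{1+\xi}$ (and, in parallel, ${\mathbf \Sigma}^0_{1+\xi}$) with respect to $\sigma$, the two assertions: (a) $A$ has the Baire property with respect to $\tau_\alpha$; and (b) for each $z\in{\rm int}_\tau(A)$, the set $A$ is $\tau_\alpha$-comeager in a $\tau_\alpha$-neighborhood of $z$. Assertion (a) is uniform in $\xi$ and needs no induction: each such $A$ is a C-set with respect to $\sigma$, hence a C-set with respect to $\tau_\alpha$ because $\sigma\subseteq\tau_\alpha$, and C-sets have the Baire property with respect to the ambient topology, as recalled before Corollary~\ref{C:tst}. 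For the base case $\xi=0$ of (b), the set $A$ is $\sigma$-closed, so condition \eqref{E:intap2} gives ${\rm int}_\tau(A)={\rm int}_{\tau_1}(A)\subseteq{\rm int}_{\tau_\alpha}(A)$, whence $A$ is itself a $\tau_\alpha$-neighborhood of each $z\in{\rm int}_\tau(A)$. In the inductive step I would reproduce, relative to $\tau_\alpha$, the three-ingredient scheme of the proof of Theorem~\ref{T:sts}: $\tau_\alpha$-nonmeagerness of the relevant $\tau$-open sets, $\tau_\alpha$-density of $A$ in a $\tau_\alpha$-neighborhood of $z$, and the Baire property from (a) together force $\tau_\alpha$-comeagerness.

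Concretely, for a ${\mathbf \Pi}^0_{1+\xi}$ set $A=\bigcap_m A_m$ with the $A_m$ of lower rank, the inclusion $z\in{\rm int}_\tau(A)\subseteq{\rm int}_\tau(A_m)$ lets me apply the inductive hypothesis to each $A_m$; the work is to extract a single $\tau_\alpha$-open $O\ni z$ that witnesses comeagerness simultaneously for all $m$, after which $A=\bigcap_m A_m$ is $\tau_\alpha$-comeager in $O$ as a countable intersection of $\tau_\alpha$-comeager sets. For a ${\mathbf \Sigma}^0_{1+\xi}$ set $B=\bigcup_k C_k$ the operation ${\rm int}_\tau$ does not distribute over the union, so I would instead localize by Baire category: since $\tau$ is Baire and each $C_k$ has the Baire property with respect to $\tau$, the set $\bigcup_k{\rm int}_\tau(C_k)$ is $\tau$-dense in ${\rm int}_\tau(B)$, while the inductive hypothesis shows that the maximal $\tau_\alpha$-open set on which $C_k$ is $\tau_\alpha$-comeager contains ${\rm int}_\tau(C_k)$. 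Transporting this $\tau$-density to a $\tau_\alpha$-statement, with the help of the metrizability of the topologies $\tau_\xi$ for $\xi<\alpha$, should produce a $\tau_\alpha$-open set containing $z$ in which $B$ is $\tau_\alpha$-comeager.

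I expect the ${\mathbf \Sigma}$-step to be the main obstacle, for two intertwined reasons: the $\tau$-interior of a countable union is not controlled by the interiors of its members, and Baire category is not preserved between $\tau$ and $\tau_\alpha$ in either direction (a $\tau_\alpha$-meager set need not be $\tau$-meager, nor conversely). The metrizability of the intermediate topologies $\tau_\xi$ and the correct-interior property \eqref{E:intap2}---which says that $\tau_{\xi+1}$ already computes $\tau$-interiors of $\tau_\xi$-closed sets---are precisely what I would use to compare the two category notions and to localize the argument at the given point $z$; the bound $\xi<\alpha$ guarantees that this comparison is carried out below stage $\alpha$, so that everything is already captured by $\tau_\alpha$. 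Once assertion (b) is established for all basic neighborhoods, the comeagerness hypothesis of Lemma~\ref{L:toe} holds (and, as in the last line of the proof of Theorem~\ref{T:sts}, $\tau_\alpha$-nonmeagerness of nonempty $\tau$-open sets is what keeps the relevant subspaces Baire), so ${\rm id}\colon(X,\tau)\to(X,\tau_\alpha)$ is a homeomorphism and $\tau_\alpha=\tau$.
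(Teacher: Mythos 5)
Your frame is the same as the paper's: both proofs end by applying Lemma~\ref{L:toe} to ${\rm id}_X\colon (X,\tau)\to (X,\tau_\alpha)$, after showing that each basic ${\mathbf \Pi}^0_{1+\xi}$ neighborhood $A$, $\xi<\alpha$, is $\tau_\alpha$-comeager in a $\tau_\alpha$-neighborhood of each point of ${\rm int}_\tau(A)$. But your engine---a rank induction on the pointwise statement (b)---breaks at both inductive steps. In the ${\mathbf \Pi}$-step, the hypothesis hands you, for each $m$, a $\tau_\alpha$-neighborhood $O_m$ of $z$ in which $A_m$ is comeager, and you need a single $\tau_\alpha$-open $O\ni z$ inside all of them; a countable intersection of neighborhoods need not be a neighborhood, and even choosing each $O_m$ maximal (the largest open set in which $A_m$ is comeager, which exists by the Banach category theorem) leaves $\bigcap_m O_m$ non-open. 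You flag this as ``the work'' but supply no mechanism, and there is none at this level of generality: the pointwise statement is the wrong induction hypothesis. In the ${\mathbf \Sigma}$-step, the asserted $\tau$-density of $\bigcup_k {\rm int}_\tau(C_k)$ in ${\rm int}_\tau(B)$ is false for sets that merely have the Baire property: in $\mathbb{R}$ with the usual topology, take $B=\mathbb{R}$, $C_0$ the irrationals and $C_k=\{q_k\}$ an enumeration of the rationals; every ${\rm int}_\tau(C_k)$ is empty. (The density claim is correct for \emph{closed} $C_k$ by Baireness, but your $C_k$ are higher-rank sets.) Finally, the transport of a $\tau$-category statement to a $\tau_\alpha$-category statement, which you defer with ``should produce,'' is precisely the unproved core of the theorem, since, as you note yourself, meagerness transfers in neither direction between $\tau$ and $\tau_\alpha$.

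The paper's proof repairs exactly these two defects by running the induction on a \emph{global} approximation statement rather than a pointwise comeagerness statement. For $A\in {\mathbf \Pi}^0_{1+\xi}$ it forms the $\tau_\xi$-closed set $c_\xi(A)=X\setminus\bigcup\{U \mid U\ \tau_\xi\hbox{-open},\ A\cap U\ \alpha\hbox{-slight}\}$ and proves by induction on the Borel rank (Lemma~\ref{L:slal}) that the two errors $A\setminus c_\xi(A)$ and $c_\xi(A)\setminus A$ are $\alpha$-slight (resp.\ $\tau_\alpha$-meager at $\xi=\alpha$). Slightness---coverability by countably many $\alpha$-tame sets with empty $\tau_\alpha$-interior---is closed under countable unions (Lemma~\ref{L:onsl}(iii)), which dissolves your $\Pi$/$\Sigma$ localization problems, and it is the bridge between the two categories that you were missing: slight sets are $\tau_\alpha$-meager (Lemma~\ref{L:onsl}(ii), via Lemma~\ref{L:fsi}(i), where metrizability enters concretely through Stone's theorem on $\sigma$-discrete refinements and through openness being a countable union of closed sets), while nonempty $\tau$-open sets are $\alpha$-solid because tame sets are countable unions of $\tau_\alpha$-closed, hence $\tau$-closed, sets, $\tau$ is Baire, and Lemma~\ref{L:frth}(i) converts nonempty $\tau$-interior into nonempty $\tau_\alpha$-interior. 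Localization at the point $z$ then happens only \emph{once}, at the very end: with $B={\rm int}_\tau(A)$ solid, Lemma~\ref{L:stab} gives that ${\rm cl}_{\tau_\xi}(B)\setminus A$ is $\tau_\alpha$-meager, and the filtration identity \eqref{E:intap2}, applied to the single $\tau_\xi$-closed set $F={\rm cl}_{\tau_\xi}(B)$, yields ${\rm int}_{\tau_\alpha}(F)={\rm int}_\tau(F)\supseteq B\ni z$, the desired neighborhood. So your base case, your endgame, and your use of \eqref{E:intap2} are all on target (and your C-set observation, while correct, is not needed on this route); what is missing is the reformulation of the induction as ``closed approximation up to slight error'' and the tame/slight calculus that carries it, including the discrete-family amalgamation that makes the case $\alpha=\omega_1$ work.
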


\begin{remark} 
{\bf 1.} We emphasize that in Theorem~\ref{T:stab2} we do not make any separability assumptions.

{\bf 2.} One can relax the assumption of metrizability but with no apparent gain in applicability; 
it suffices to assume that $\tau_\xi$ are paracompact and that sets that are $\tau_\xi$-closed are 
intersections of countably many sets that are $\tau_\xi$-open, for all $\xi<\alpha$. 

{\bf 3.} When $\alpha=\omega_1$, then, of course, $\bigcup_{\xi<\alpha}{\mathbf \Pi}^0_{1+\xi}$ 
is the family of all Borel sets with respect to $\sigma$. 
\end{remark}

Fix $(\tau_\xi)_{\xi<\rho}$, a transfinite sequence of topologies fulfilling \eqref{E:cot}. 

Let $\alpha<\rho$. 
Define {\bf $\alpha$-tame} sets to be the smallest family of subsets of $X$ containing $\tau_\xi$-closed sets for each $\xi<\alpha$ 
and closed under the following operation. Let $\mathcal U$ be a $\tau_\xi$-discrete family of $\tau_\xi$-open sets, for some $\xi<\alpha$. 
Let $F^U$ be an $\alpha$-tame set, for $U\in {\mathcal U}$. Then 
\[
\bigcup_{U\in {\mathcal U}} \left(F^U\cap U\right)
\]
is $\alpha$-tame. 

The class of $\alpha$-tame sets is needed in the proof of Theorem~\ref{T:stab2} to handle the case $\alpha=\omega_1$. 
If $\alpha<\omega_1$, the simpler family of sets 
that are $\tau_\xi$-closed for $\xi<\alpha$ suffices. This is reflected in Lemma~\ref{L:fsi}(ii) below.

\begin{lemma}\label{L:fsi} 
Let $(\tau_\xi)_{\xi<\rho}$ be a transfinite sequence fulfilling (1), and let $\alpha<\rho$. 
\begin{enumerate}
\item[(i)] If $\tau_\xi$ is metrizable, for each $\xi<\alpha$, then each $\alpha$-tame set is a countable union of $\tau_\alpha$-closed sets. 

\item[(ii)] If $\alpha<\omega_1$ and $\tau_\xi$ is metrizable, for each $\xi<\alpha$, then each $\alpha$-tame set is a countable union of $\tau_\xi$-closed sets with 
$\xi<\alpha$. That is, for each $\alpha$-tame set $F$, there exist sets $F_n$, $n\in {\mathbb N}$, such that $F_n$ is $\tau_{\xi_n}$-closed, for some $\xi_n<\alpha$, 
and $F= \bigcup_n F_n$. 
\end{enumerate}
\end{lemma}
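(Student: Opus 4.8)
The plan is to prove both parts by structural induction on the definition of $\alpha$-tame sets: I verify that the class of sets admitting the desired decomposition contains all the generators (the $\tau_\xi$-closed sets for $\xi<\alpha$) and is closed under the defining operation, and then minimality of the class of $\alpha$-tame sets finishes the argument. Two standard facts do the real work. First, in a metrizable topology every open set is an $F_\sigma$, so each $\tau_\xi$-open set $U$ with $\xi<\alpha$ can be written as $U=\bigcup_m U_m$ with each $U_m$ being $\tau_\xi$-closed. Second, a $\tau$-discrete family of $\tau$-closed sets has $\tau$-closed union. I will also use repeatedly that if $\mathcal U$ is $\tau_\xi$-discrete and $A_U\subseteq U$ for each $U$, then $\{A_U\}$ is $\tau_\xi$-discrete, hence $\tau$-discrete for any $\tau\supseteq\tau_\xi$. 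For the generators both decompositions are immediate, a $\tau_\xi$-closed set ($\xi<\alpha$) being a one-term union of the required kind, using $\tau_\xi\subseteq\tau_\alpha$ in part (i).

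For the inductive step I fix a $\tau_\xi$-discrete family $\mathcal U$ of $\tau_\xi$-open sets ($\xi<\alpha$) and $\alpha$-tame sets $F^U$. Writing $U=\bigcup_m U_m$ with $U_m$ being $\tau_\xi$-closed and, by the inductive hypothesis, $F^U=\bigcup_n F^U_n$, I obtain
\[
\bigcup_{U\in\mathcal U}(F^U\cap U)=\bigcup_{n,m}\ \bigcup_{U\in\mathcal U}\bigl(F^U_n\cap U_m\bigr).
\]
In part (i) each $F^U_n$ is $\tau_\alpha$-closed and $U_m$ is $\tau_\xi$-closed, hence $\tau_\alpha$-closed, so $F^U_n\cap U_m$ is $\tau_\alpha$-closed; since $\{F^U_n\cap U_m:U\in\mathcal U\}$ is $\tau_\alpha$-discrete, for each fixed $n,m$ the inner union is $\tau_\alpha$-closed, and the whole set is a countable union of $\tau_\alpha$-closed sets, as wanted.

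The point that needs care is part (ii), because there $F^U_n$ is only $\tau_{\zeta^U_n}$-closed for some $\zeta^U_n<\alpha$, and as $U$ ranges over $\mathcal U$ these levels may be cofinal in $\alpha$ when $\alpha$ is a limit; so the inner union $\bigcup_U(F^U_n\cap U_m)$ is not visibly closed in any single topology below $\alpha$. I resolve this by splitting according to level: setting $\beta^U_n=\max(\zeta^U_n,\xi)<\alpha$ and
\[
K_{\beta,n,m}=\bigcup\{F^U_n\cap U_m:U\in\mathcal U,\ \beta^U_n=\beta\},
\]
every member $F^U_n\cap U_m$ is $\tau_\beta$-closed (as $\zeta^U_n\le\beta$ and $\xi\le\beta$), and the relevant subfamily of $\mathcal U$ is $\tau_\xi$-discrete, hence $\tau_\beta$-discrete; therefore $K_{\beta,n,m}$ is $\tau_\beta$-closed with $\beta<\alpha$. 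Since $\alpha<\omega_1$ there are only countably many levels $\beta$, so
\[
\bigcup_{U\in\mathcal U}(F^U\cap U)=\bigcup_{\beta<\alpha}\ \bigcup_{n,m}K_{\beta,n,m}
\]
is a countable union of $\tau_\zeta$-closed sets with $\zeta<\alpha$, as required. The main obstacle is precisely this level-bookkeeping in the limit case, and countability of $\alpha$ is what keeps the regrouping countable; it is the only place where part (ii) genuinely differs from part (i), which can push everything up to the single level $\tau_\alpha$.
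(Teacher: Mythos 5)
Your proof is correct and takes essentially the same approach as the paper's: both decompose each $\tau_\xi$-open $U\in\mathcal U$ into countably many $\tau_\xi$-closed pieces, regroup the resulting double union over the discrete family, and, in part (ii), split the inner union by the level $\max(\zeta^U_n,\xi)<\alpha$ (the paper's $\tau_{\max(\xi,\gamma)}$-closed grouping), with $\alpha<\omega_1$ guaranteeing countability. The only cosmetic difference is that you make the structural induction and the treatment of the generators explicit, which the paper leaves implicit.
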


\begin{proof} 
We prove point (i). It suffices to show that the family of countable unions of $\tau_\alpha$-closed sets is closed under the operation 
in the definition of $\alpha$-tame sets. Let $\mathcal U$ be a $\tau_\xi$-discrete family of $\tau_\xi$-open sets, for some 
$\xi<\alpha$, and let $F_k^U$, for $U\in {\mathcal U}$, $k\in {\mathbb N}$, 
be $\tau_\alpha$-closed sets. We need to see that 
\begin{equation}\label{E:unon}
\bigcup_{U\in {\mathcal U}} \left(\left(\bigcup_k F_k^U\right)\cap U\right)
\end{equation} 
is a countable union of $\tau_\alpha$-closed sets. Recall that each $\tau_\xi$-open set is a countable union of $\tau_\xi$-closed sets. So, for each 
$U\in {\mathcal U}$, we can fix $\tau_\xi$-closed sets $H^U_n$, $n\in {\mathbb N}$, such that $U=\bigcup_n H^U_n$. Thus, the set in \eqref{E:unon} 
can be represented as 
\begin{equation}\label{E:unond}
\bigcup_k\bigcup_n \left( \bigcup_{U\in {\mathcal U}} \left(F^U_k\cap H^U_n\right)\right)
\end{equation}
and $\tau_\xi$-discreteness of $\mathcal U$, which implies $\tau_\alpha$-discreteness of $\mathcal U$, ensures that the sets 
$\bigcup_{U\in {\mathcal U}} \left(F^U_k\cap H^U_n\right)$ are $\tau_\alpha$-closed. 

The proof of (ii) is similar to the proof of (i). With the notation (${\mathcal U},\, \xi,\, F^U_k$) as above, we assume that for each $U\in {\mathcal U}$ and $k\in {\mathbb N}$, 
the set $F_k^U$ is $\tau_{\gamma^U_k}$-closed for some $\gamma^U_k<\alpha$. Working with formula \eqref{E:unond}, we see that 
\begin{equation}\label{E:remi}
\bigcup_{U\in {\mathcal U}} \left(F^U_k\cap H^U_n\right) = \bigcup_{\gamma<\alpha} \bigcup \{ F^U_k\cap H^U_n\mid \gamma^U_k=\gamma\}. 
\end{equation} 
Now, the first union on the right hand side of \eqref{E:remi} is countable and, by $\tau_\xi$-discreteness of $\mathcal U$, we see that 
\[
\bigcup \{ F^U_k\cap H^U_n\mid \gamma^U_k=\gamma\}
\]
is $\tau_{\max(\xi, \gamma)}$-closed. Since $\max(\xi, \gamma)<\alpha$, point (iii) and the lemma follow. 
\end{proof}

We say that $A\subseteq X$ is {\bf $\alpha$-solid} if for each countable family $\mathcal F$ of $\alpha$-tame  sets with 
$\bigcup {\mathcal F}$ containing a non-empty relatively $\tau_\alpha$-open subset of $A$, we have  
${\rm int}_{\tau_\alpha}(F)\not= \emptyset$ for some $F\in {\mathcal F}$. 
We call a set $A\subseteq X$ {\bf $\alpha$-slight} if there exists a countable family $\mathcal F$ with $A\subseteq \bigcup {\mathcal F}$ and such that 
each $F\in {\mathcal F}$ is $\alpha$-tame and ${\rm int}_{\tau_\alpha}(F) =\emptyset$. 
We register the following lemma that follows directly from the definitions. 

\begin{lemma}\label{L:obv} 
Let $(\tau_\xi)_{\xi<\rho}$ be a transfinite sequence fulfilling (1), and let $\alpha<\rho$. 
A set is $\alpha$-solid if and only if no non-empty relatively $\tau_\alpha$-open subset of it is $\alpha$-slight.
\end{lemma}

Lemma~\ref{L:onsl} below that contains basic properties of $\alpha$-slight sets. 

\begin{lemma}\label{L:onsl}
Let $(\tau_\xi)_{\xi<\rho}$ be a transfinite sequence fulfilling (1), and let $\alpha<\rho$. 
\begin{enumerate}
\item[(i)] The empty set is $\alpha$-slight. 

\item[(ii)] If $\tau_\xi$ is metrizable, for each $\xi<\alpha$, then $\alpha$-slight sets are $\tau_\alpha$-meager. 

\item[(iii)] Countable unions of $\alpha$-slight sets are $\alpha$-slight. 

\item[(iv)] Assume $\tau_\xi$ is metrizable, for each $\xi<\alpha$. 
Let $A\subseteq X$. Assume that for some $\xi<\alpha$, there is a family $\mathcal U$ of $\tau_\xi$-open sets such that $A\subseteq \bigcup {\mathcal U}$ and 
$A\cap U$ is $\alpha$-slight for each $U\in {\mathcal U}$. Then $A$ is $\alpha$-slight. 

\item[(v)] Assume that $\alpha<\omega_1$ and $\tau_\xi$ is metrizable, for each $\xi<\alpha$. 
Let $A\subseteq X$. Then $A$ is $\alpha$-slight if and only if there is a countable family $\mathcal F$ such that $A\subseteq \bigcup {\mathcal F}$ and 
each $F\in {\mathcal F}$ is $\tau_\xi$-closed, for some $\xi<\alpha$ depending on $F$, and ${\rm int}_{\tau_\alpha}(F)=\emptyset$. 
\end{enumerate}
\end{lemma}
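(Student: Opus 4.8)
The plan is to prove the five parts of Lemma~\ref{L:onsl} essentially in the order stated, since each later part leans on the earlier ones and on the structural results Lemma~\ref{L:fsi} and Lemma~\ref{L:obv}. Part (i) is immediate: the empty set is itself $\tau_0$-closed (hence $\alpha$-tame) with empty $\tau_\alpha$-interior, so the singleton family $\{\emptyset\}$ witnesses $\alpha$-slightness. For part (iii), if each $A_m$ is $\alpha$-slight, witnessed by a countable family ${\mathcal F}_m$ of $\alpha$-tame sets with empty $\tau_\alpha$-interior, then $\bigcup_m {\mathcal F}_m$ is again a countable family of the same kind covering $\bigcup_m A_m$, so the union is $\alpha$-slight. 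These two are routine bookkeeping.

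For part (ii) I would invoke Lemma~\ref{L:fsi}(i): under metrizability of the $\tau_\xi$ for $\xi<\alpha$, every $\alpha$-tame set is a countable union of $\tau_\alpha$-closed sets. Hence if $A$ is $\alpha$-slight, $A$ is covered by countably many $\alpha$-tame sets $F$ each with ${\rm int}_{\tau_\alpha}(F)=\emptyset$; rewriting each such $F$ as a countable union of $\tau_\alpha$-closed pieces, each piece is $\tau_\alpha$-closed and has empty $\tau_\alpha$-interior (being contained in $F$), so each piece is $\tau_\alpha$-nowhere dense. Thus $A$ lies in a countable union of $\tau_\alpha$-nowhere dense sets, i.e.\ $A$ is $\tau_\alpha$-meager, as claimed.

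Part (iv) is the heart of the lemma and the step I expect to be the main obstacle, because one must fuse a possibly uncountable open cover $\mathcal U$ into a single countable witnessing family. The idea is to pass from $\mathcal U$ to a $\tau_\xi$-discrete refinement: since $\tau_\xi$ is metrizable, the open cover $\mathcal U$ of $\bigcup{\mathcal U}\supseteq A$ has a $\sigma$-discrete open refinement $\bigcup_n {\mathcal V}_n$, where each ${\mathcal V}_n$ is a $\tau_\xi$-discrete family of $\tau_\xi$-open sets and each $V\in{\mathcal V}_n$ is contained in some $U\in{\mathcal U}$. For a fixed $n$ and each $V\in{\mathcal V}_n$ the set $A\cap V\subseteq A\cap U$ is $\alpha$-slight, so choose a countable witnessing family; by Lemma~\ref{L:fsi}(i) I can take its members to be $\tau_\alpha$-closed with empty $\tau_\alpha$-interior, say $A\cap V\subseteq\bigcup_k G^V_k$ with each $G^V_k$ $\tau_\alpha$-closed, ${\rm int}_{\tau_\alpha}(G^V_k)=\emptyset$. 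Replacing $G^V_k$ by $G^V_k\cap V$ I may assume $G^V_k\subseteq V$. Then for each fixed $n,k$ the set $\bigcup_{V\in{\mathcal V}_n}(G^V_k\cap V)$ is $\alpha$-tame (it is exactly the defining operation applied to the $\tau_\xi$-discrete family ${\mathcal V}_n$), and by $\tau_\xi$-discreteness, which forces $\tau_\alpha$-discreteness, it is a $\tau_\alpha$-closed set with empty $\tau_\alpha$-interior, a $\tau_\alpha$-interior point would have to be an interior point of some single $G^V_k$. Ranging over the countably many pairs $(n,k)$ yields a countable family of $\alpha$-tame sets of empty $\tau_\alpha$-interior covering $A$, so $A$ is $\alpha$-slight.

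Finally, part (v) combines the equivalence with Lemma~\ref{L:fsi}(ii). The backward direction is trivial since every $\tau_\xi$-closed set with $\xi<\alpha$ is $\alpha$-tame. For the forward direction, assume $\alpha<\omega_1$ and that $A$ is $\alpha$-slight, witnessed by a countable family of $\alpha$-tame sets $F$ with ${\rm int}_{\tau_\alpha}(F)=\emptyset$. By Lemma~\ref{L:fsi}(ii) each such $F$ is a countable union $\bigcup_m F_m$ with $F_m$ being $\tau_{\xi_m}$-closed for some $\xi_m<\alpha$; each $F_m\subseteq F$ has empty $\tau_\alpha$-interior. Collecting all these pieces over the countably many $F$ gives the required countable family of sets that are $\tau_\xi$-closed for various $\xi<\alpha$ and of empty $\tau_\alpha$-interior, completing the proof.
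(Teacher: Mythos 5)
Your parts (i)--(iii) and (v) essentially follow the paper's route, but there is a genuine gap in your proof of (iv), and it is exactly at the step you identified as the heart of the lemma. After refining $\mathcal U$ to the $\tau_\xi$-discrete families ${\mathcal V}_n$, you replace the $\alpha$-tame witnesses for each $A\cap V$ by $\tau_\alpha$-closed sets $G^V_k$ via Lemma~\ref{L:fsi}(i), and then claim that $\bigcup_{V\in{\mathcal V}_n}\left(G^V_k\cap V\right)$ is $\alpha$-tame ``by the defining operation.'' That claim fails: the defining operation for $\alpha$-tame sets takes $\alpha$-tame sets $F^U$ as inputs, and the base of the recursion consists of $\tau_\xi$-closed sets only for $\xi$ \emph{strictly below} $\alpha$; a $\tau_\alpha$-closed set is in general not $\alpha$-tame (already for $\alpha=1$, the $1$-tame sets are generated from $\tau_0$-closed sets, and a $\tau_1$-closed set need not be expressible this way). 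What your fused family actually witnesses is that $A$ is $\tau_\alpha$-meager --- the conclusion of (ii) --- not that $A$ is $\alpha$-slight, and $\alpha$-slightness is precisely what is needed downstream (in Lemma~\ref{L:onso} and Lemma~\ref{L:slal}). The repair is to skip the conversion entirely, which is what the paper does: keep the original $\alpha$-tame witnesses $F^V_k$ with $A\cap V\subseteq\bigcup_k F^V_k$ and ${\rm int}_{\tau_\alpha}(F^V_k)=\emptyset$, and fuse those directly. Then $E_k=\bigcup_{V\in{\mathcal V}_n}\left(F^V_k\cap V\right)$ is $\alpha$-tame by the very definition of the tame operation, and $\tau_\xi$-discreteness of ${\mathcal V}_n$ (members of a discrete family of open sets are pairwise disjoint, and each point has a $\tau_\xi$-open, hence $\tau_\alpha$-open, neighborhood meeting at most one member) gives ${\rm int}_{\tau_\alpha}(E_k)=\emptyset$, since an interior point of $E_k$ would produce a non-empty $\tau_\alpha$-interior of a single $F^V_k$. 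With that change, ranging over the pairs $(n,k)$, the rest of your argument goes through and coincides with the paper's.

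A minor edge case in (i): your witness $\{\emptyset\}$ presupposes that $\emptyset$ is $\alpha$-tame, which requires the existence of some $\tau_\xi$-closed set with $\xi<\alpha$, i.e.\ $\alpha\geq 1$. For $\alpha=0$ the family of $0$-tame sets generated by the definition is empty, and the correct witness is the empty family ${\mathcal F}=\emptyset$, using the convention $\bigcup\emptyset=\emptyset$ --- a point the paper makes explicitly. Your proofs of (ii), (iii), and (v) are correct and are the paper's arguments: (ii) via Lemma~\ref{L:fsi}(i), (iii) by combining countable witnessing families, and (v) via Lemma~\ref{L:fsi}(ii) for the forward direction.
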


\begin{proof} Points (i) and (iii) are obvious, with point (i) for $\alpha=0$ being true due to the set theoretic convention that 
the union of an empty family of sets is the empty set. Point (ii) is immediate from Lemma~\ref{L:fsi}(i). 

We show (iv). 
By Stone's Theorem \cite[Theorem~4.4.1]{En}
each family of $\tau_\xi$-open sets has a refinement that is a $\sigma$-discrete, 
with respect to $\tau_\xi$, family of $\tau_\xi$-open sets. 
Therefore, there are $\tau_\xi$-discrete families ${\mathcal U}_n$, $n\in {\mathbb N}$, of $\tau_\xi$-open sets such that, given $n$, 
the set $A\cap U$ is $\alpha$-slight for each $U\in {\mathcal U}_n$ and 
the family $\bigcup_n {\mathcal U}_n$ covers $A$. Thus,
by (iii), it suffices to show the following statement: if $\mathcal U$ is a $\tau_\xi$-discrete family of $\tau_\xi$-open sets 
covering $A$ such that $A\cap U$ is $\alpha$-slight, for each $U\in {\mathcal U}$, then $A$ is $\alpha$-slight. 
Now, for $U\in {\mathcal U}$, we can find a sequence $F_k^U$, $k\in {\mathbb N}$, of $\alpha$-tame sets 
such that $A\cap U \subseteq \bigcup_k F^U_k$ and ${\rm int}_{\tau_\alpha}( F^U_k)=\emptyset$ for each $k$. 
By $\tau_\xi$-discreteness of ${\mathcal U}$, for each $k\in {\mathbb N}$, the set 
\[
E_k= \bigcup \{ F^U_k\cap U \mid U\in {\mathcal U} \}
\]
is $\alpha$-tame. Again, using $\tau_\xi$-discreteness of $\mathcal U$, we see that,  for each $k$, $E_k$ 
has empty interior with respect to $\tau_\alpha$. Since $A \subseteq \bigcup_k E_k$, we see that 
$A$ is $\alpha$-slight. 

To see (v), note first that the direction $\Leftarrow$ is obvious since for each $\xi<\alpha$, each $\tau_\xi$-closed set is $\alpha$-tame. The 
direction $\Rightarrow$ follows from Lemma~\ref{L:fsi}(ii). 
\end{proof}

We record a reformulation of Lemma~\ref{L:onsl}(iv) that we will need later in the paper.

\begin{lemma}\label{L:onso}
Let $(\tau_\xi)_{\xi<\rho}$ be a transfinite sequence fulfilling (1), and let $\alpha<\rho$. 
Assume that $\alpha<\omega_1$ and $\tau_\xi$ is metrizable, for each $\xi<\alpha$. 
Then $A\subseteq X$ is $\alpha$-solid if and only if
for each countable family $\mathcal F$ of sets such that each $F\in {\mathcal F}$ is $\tau_\xi$-closed, for some $\xi<\alpha$ depending on $F$, and 
$\bigcup {\mathcal F}$ contains a non-empty relatively $\tau_\alpha$-open subset of $A$, we have  
${\rm int}_{\tau_\alpha}(F)\not= \emptyset$ for some $F\in {\mathcal F}$. 
\end{lemma}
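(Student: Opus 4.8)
The plan is to prove Lemma~\ref{L:onso} by showing that it is essentially a restatement of Lemma~\ref{L:onsl}(iv), combined with the characterization of $\alpha$-slight sets provided by Lemma~\ref{L:onsl}(v). The statement to prove is a biconditional characterizing $\alpha$-solidity purely in terms of countable families of $\tau_\xi$-closed sets (with $\xi<\alpha$), rather than in terms of the more general $\alpha$-tame sets appearing in the original definition of $\alpha$-solid. Since we are in the regime $\alpha<\omega_1$ with all $\tau_\xi$ metrizable for $\xi<\alpha$, the full strength of Lemma~\ref{L:onsl}(v) is available, which equates $\alpha$-slightness with coverability by a countable family of $\tau_\xi$-closed sets having empty $\tau_\alpha$-interior.

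First I would recall, via Lemma~\ref{L:obv}, that $A$ is $\alpha$-solid precisely when no non-empty relatively $\tau_\alpha$-open subset of $A$ is $\alpha$-slight. The right-hand condition in Lemma~\ref{L:onso}, by contrast, is phrased as a covering-and-interior statement restricted to families of $\tau_\xi$-closed sets. So the core of the argument is to verify that these two formulations agree under the stated hypotheses. For the direction $\Rightarrow$, I would argue contrapositively: suppose the right-hand condition fails, so there is a countable family $\mathcal F$ of $\tau_\xi$-closed sets (each with $\xi<\alpha$) whose union contains a non-empty relatively $\tau_\alpha$-open subset $W$ of $A$, yet ${\rm int}_{\tau_\alpha}(F)=\emptyset$ for every $F\in {\mathcal F}$. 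Then $\{F\cap W\mid F\in {\mathcal F}\}$ witnesses that $W$ is $\alpha$-slight by Lemma~\ref{L:onsl}(v), so $A$ has an $\alpha$-slight non-empty relatively $\tau_\alpha$-open subset and is therefore not $\alpha$-solid.

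For the converse direction $\Leftarrow$, suppose $A$ is not $\alpha$-solid, so by Lemma~\ref{L:obv} some non-empty relatively $\tau_\alpha$-open subset $W\subseteq A$ is $\alpha$-slight. Applying Lemma~\ref{L:onsl}(v) to $W$, I obtain a countable family $\mathcal F$ of sets, each $\tau_\xi$-closed for some $\xi<\alpha$ and each with empty $\tau_\alpha$-interior, covering $W$. This family then contains a non-empty relatively $\tau_\alpha$-open subset of $A$, namely $W$ itself, while no member has non-empty $\tau_\alpha$-interior, so the right-hand condition of the lemma fails. Since both implications are contrapositive equivalences between ``$A$ is $\alpha$-solid'' and ``the right-hand condition holds,'' the biconditional follows.

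The argument is short and I expect no serious obstacle, since Lemma~\ref{L:onsl}(v) does all the heavy lifting by reducing $\alpha$-slightness to the closed-set formulation. The one point requiring mild care is the bookkeeping with relatively $\tau_\alpha$-open subsets: one must ensure that intersecting the covering family with $W$ (in the $\Rightarrow$ direction) preserves both $\tau_\xi$-closedness relative to the ambient comparison and the empty-interior condition, but this is routine because restriction to a relatively open set does not enlarge $\tau_\alpha$-interiors in a way that creates new solidity, and Lemma~\ref{L:onsl}(v) is stated in exactly the form needed to match the covering condition.
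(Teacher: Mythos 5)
Your proof is correct and takes essentially the same route as the paper, whose entire proof is the one-line observation that the statement follows from Lemma~\ref{L:obv} together with the closed-set characterization of $\alpha$-slightness in Lemma~\ref{L:onsl} (the paper cites part (iv), apparently a typo for part (v), given the hypotheses $\alpha<\omega_1$ and metrizability). One small repair: in your forward direction, do not pass to the family $\{F\cap W\mid F\in{\mathcal F}\}$ --- these intersections need not be $\tau_\xi$-closed (nor $\alpha$-tame), and the ``routine bookkeeping'' you allude to at the end does not actually go through for them --- instead simply observe that ${\mathcal F}$ itself covers $W$, so $W$ is $\alpha$-slight directly by the $\Leftarrow$ direction of Lemma~\ref{L:onsl}(v) (or by the definition of $\alpha$-slight, since $\tau_\xi$-closed sets are $\alpha$-tame).
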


\begin{proof} The lemma follows from Lemmas~\ref{L:onsl}(iv) and \ref{L:obv}. 
\end{proof}

We say that $(\tau_\xi)_{\xi<\rho}$ is a {\bf weak filtration from $\sigma$ to $\tau$} provided (1) holds and, for each $\alpha<\rho$, 
if $F$ is $\tau_\xi$-closed for some $\xi<\alpha$, then 
\begin{equation}\label{E:intap}
{\rm int}_{\tau_\alpha}(F) \hbox{ is $\tau$-dense in }  {\rm int}_{\tau}(F). 
\end{equation}
It is clear that each filtration is a weak filtration. 

\begin{lemma}\label{L:frth}
Let $(\tau_\xi)_{\xi<\rho}$ be a weak filtration from $\sigma$ to $\tau$. 
\begin{enumerate}
\item[(i)] If $\alpha<\rho$ and $F$ is $\alpha$-tame, then 
${\rm int}_{\tau_\alpha}(F) \hbox{ is $\tau$-dense in }  {\rm int}_{\tau}(F)$. 

\item[(ii)] If $\alpha\leq\beta<\rho$, then $\beta$-solid sets are $\alpha$-solid. 
\end{enumerate}
\end{lemma}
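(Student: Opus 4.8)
The plan is to prove part (i) by induction on the generation of $\alpha$-tame sets, and then to read off part (ii) from part (i) together with the monotonicity \eqref{E:cot} of the topologies $\tau_\xi$.

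For part (i), let $\mathcal D$ be the family of all $F\subseteq X$ such that ${\rm int}_{\tau_\alpha}(F)$ is $\tau$-dense in ${\rm int}_\tau(F)$. Since the $\alpha$-tame sets form the smallest family containing the $\tau_\xi$-closed sets ($\xi<\alpha$) and closed under the tame operation, it suffices to check that $\mathcal D$ enjoys both of these closure properties. The base case is nothing but the defining clause \eqref{E:intap} of a weak filtration: if $F$ is $\tau_\xi$-closed for some $\xi<\alpha$, then $F\in\mathcal D$ by definition.

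The inductive step is where the real work lies, and I expect it to be the main obstacle. Suppose $\mathcal U$ is a $\tau_\xi$-discrete family of $\tau_\xi$-open sets ($\xi<\alpha$) and $F^U\in\mathcal D$ for each $U\in\mathcal U$; I must show $F=\bigcup_{U\in\mathcal U}(F^U\cap U)\in\mathcal D$. The key point is that $\tau_\xi$-discreteness, since $\tau_\xi\subseteq\tau$, is also $\tau$-discreteness, and a discrete family is pairwise disjoint. Thus, given a non-empty $\tau$-open $V\subseteq{\rm int}_\tau(F)$, I would pick $x\in V$ and, using $\tau$-discreteness, shrink $V$ to a non-empty $\tau$-open $V'\ni x$ meeting at most one member of $\mathcal U$. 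As $V'\subseteq F\subseteq\bigcup\mathcal U$, it meets exactly one member $U_0$, and disjointness forces $V'\subseteq U_0$ and $F\cap U_0=F^{U_0}\cap U_0$; hence $V'\subseteq F^{U_0}$ and, being $\tau$-open, $V'\subseteq{\rm int}_\tau(F^{U_0})$. Applying $F^{U_0}\in\mathcal D$ to the non-empty $\tau$-open set $V'$ produces a point $y\in V'\cap{\rm int}_{\tau_\alpha}(F^{U_0})$. Finally I would upgrade $y$ to a point of ${\rm int}_{\tau_\alpha}(F)$: choosing a $\tau_\alpha$-open $P\ni y$ with $P\subseteq F^{U_0}$ and intersecting with the $\tau_\alpha$-open set $U_0$ gives $y\in P\cap U_0\subseteq F^{U_0}\cap U_0=F\cap U_0\subseteq F$, so $y\in V\cap{\rm int}_{\tau_\alpha}(F)$. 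This proves density, completing the induction.

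For part (ii), assume $A$ is $\beta$-solid and let $\mathcal F$ be a countable family of $\alpha$-tame sets whose union contains a non-empty relatively $\tau_\alpha$-open subset of $A$. Since $\alpha\leq\beta$, every $\alpha$-tame set is $\beta$-tame (the generating closed sets and discrete families admitted at level $\alpha$ are admitted at level $\beta$), and, because $\tau_\alpha\subseteq\tau_\beta$, the given subset is also relatively $\tau_\beta$-open in $A$. Thus $\beta$-solidity supplies an $F\in\mathcal F$ with ${\rm int}_{\tau_\beta}(F)\neq\emptyset$. As $\tau_\beta\subseteq\tau$, this forces ${\rm int}_\tau(F)\neq\emptyset$; and since $F$ is $\alpha$-tame, part (i) says ${\rm int}_{\tau_\alpha}(F)$ is $\tau$-dense in the non-empty set ${\rm int}_\tau(F)$, whence ${\rm int}_{\tau_\alpha}(F)\neq\emptyset$. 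This is exactly the $\alpha$-solidity requirement for $\mathcal F$, so $A$ is $\alpha$-solid.
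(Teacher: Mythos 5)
Your proof is correct and follows essentially the same route as the paper: both parts proceed by showing the family of sets $F$ with ${\rm int}_{\tau_\alpha}(F)$ $\tau$-dense in ${\rm int}_\tau(F)$ contains the generators (via \eqref{E:intap}) and is closed under the tame operation using $\tau$-discreteness, and part (ii) is deduced from (i) exactly as in the paper. The only difference is presentational: where you chase points through a shrunken open set $V'$ localized to a single $U_0\in\mathcal U$, the paper packages the same localization into the identities ${\rm int}_\tau\bigl(\bigcup_{U}(F^U\cap U)\bigr)=\bigcup_{U}\bigl({\rm int}_\tau(F^U)\cap U\bigr)$ and ${\rm int}_{\tau_\alpha}(F^U\cap U)={\rm int}_{\tau_\alpha}(F^U)\cap U$, reducing density to each member separately.
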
 

\begin{proof} (i) We consider the family 
\[
{\mathcal F} = \{ F\mid F\subseteq X,\,  {\rm int}_{\tau_\alpha}(F)\hbox{ is $\tau$-dense in }{\rm int}_{\tau}(F)\}. 
\]
We need to show that $\alpha$-tame sets are included in $\mathcal F$. Since $(\tau_\xi)_{\xi<\rho}$ is a weak filtration, $\mathcal F$ 
contains all $\tau_\xi$-closed sets for all $\xi<\alpha$. It remains to see that $\mathcal F$ is closed under the operation in the definition 
of $\alpha$-tame sets. 
Let $\mathcal U$ be a $\tau_\xi$-discrete family of $\tau_\xi$-open sets, for some $\xi<\alpha$. Let $F^U$ be sets in $\mathcal F$, for $U\in {\mathcal U}$. 
We need to see that 
\[
\bigcup_{U\in {\mathcal U}} \left(F^U\cap U\right)\in {\mathcal F}. 
\]

Since the family $\mathcal U$ is $\tau_\xi$-discrete, so $\tau$-discrete, and sets in $\mathcal U$ are $\tau_\xi$-open, so $\tau$-open, we have 
\[
{\rm int}_\tau\left( \bigcup_{U\in {\mathcal U}} \left(F^U\cap U\right)\right) = \bigcup_{U\in {\mathcal U}} {\rm int}_\tau\left(F^U\cap U\right)= 
\bigcup_{U\in {\mathcal U}} \left( {\rm int}_\tau\left(F^U\right)\cap U\right). 
\]
Since  each $U\in {\mathcal U}$ is also $\tau_\alpha$-open, we have 
\[
{\rm int}_{\tau_\alpha} \left(F^U\cap U\right)   =    {\rm int}_{\tau_\alpha} \left(F^U\right) \cap U.
\]
It follows that it is enough to show that ${\rm int}_{\tau_\alpha} \left(F^U\right) \cap U$ is $\tau$-dense in ${\rm int}_\tau \left(F^U\right) \cap U$, 
for each for $U\in {\mathcal U}$. But this is clear since, by assumption, ${\rm int}_{\tau_\alpha} \left(F^U\right)$ is $\tau$-dense in ${\rm int}_\tau \left(F^U\right)$ and $U$ is $\tau$-open being $\tau_\xi$-open. 

(ii) We make two observations. First, clearly, $\alpha$-tame sets are $\beta$-tame. 
Second, for $F\subseteq X$, ${\rm int}_{\tau_\beta}(F)\not=\emptyset$ trivially implies that 
${\rm int}_{\tau}(F)\not= \emptyset$. If now $F$ is $\alpha$-tame, then 
${\rm int}_{\tau}(F)\not= \emptyset$ implies ${\rm int}_{\tau_\alpha}(F) \not=\emptyset$, by (i). 
So for $\alpha$-tame sets, ${\rm int}_{\tau_\beta}(F)\not=\emptyset$ implies ${\rm int}_{\tau_\alpha}(F) \not=\emptyset$. 
These two observations give (ii). 
\end{proof}

The statement of the following technical result is more precise than what is needed in this section, but this more refined 
version will be used in Section~\ref{S:eqr}. Its proof extends the arguments in \cite[Lemma~4.1]{So2}. There are also analogies with \cite[Lemmas 8 and 9]{Lo}.

\begin{lemma}\label{L:stab}
Let $\alpha\leq\omega_1$. Assume that $(\tau_\xi)_{\xi\leq\alpha}$ is a weak filtration from $\sigma$, with $\tau_\xi$ metrizable for $\xi<\alpha$. 
If $A\subseteq X$ is ${\mathbf \Pi}^0_{1+\xi}$ with respect to $\sigma$, for some $\xi\leq\alpha$, $\xi<\omega_1$, and $B\subseteq A$ is $\alpha$-solid, 
then ${\rm cl}_{\tau_\xi}(B)\setminus A$ is $\tau_\alpha$-meager. 
\end{lemma}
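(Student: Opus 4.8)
The plan is to argue by transfinite induction on $\xi$ (keeping $\xi\le\alpha$ and $\xi<\omega_1$ throughout, and with $\sigma=\tau_0$ and $\tau=\bigvee_{\zeta\le\alpha}\tau_\zeta$). For $\xi=0$ the hypothesis says $A$ is $\sigma$-closed, so ${\rm cl}_{\tau_0}(B)={\rm cl}_\sigma(B)\subseteq A$ and ${\rm cl}_{\tau_0}(B)\setminus A=\emptyset$. For the inductive step I would first lower the complexity of $A$: writing $X\setminus A=\bigcup_m A_m$ with each $A_m$ being ${\mathbf\Pi}^0_{1+\xi_m}$ with respect to $\sigma$ for some $\xi_m<\xi$ (every $\gamma$ with $1\le\gamma<1+\xi$ has the form $1+\xi_m$, $\xi_m<\xi$), we get ${\rm cl}_{\tau_\xi}(B)\setminus A=\bigcup_m\bigl({\rm cl}_{\tau_\xi}(B)\cap A_m\bigr)$. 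Since $\tau_\alpha$-meager sets form a $\sigma$-ideal, it suffices to treat a single $A'=A_m$, with $\eta=\xi_m<\xi$. The essential point is that $B\cap A'=\emptyset$, because $B\subseteq A$ while $A'\subseteq X\setminus A$.

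The workhorse is a \emph{single-layer} estimate that I am confident about: if $F$ is $\tau_\zeta$-closed for some $\zeta<\xi$ and $B\cap F=\emptyset$, then ${\rm cl}_{\tau_\xi}(B)\cap F$ is $\tau_\alpha$-meager. Indeed ${\rm int}_{\tau_\xi}(F)$ is $\tau_\xi$-open and disjoint from $B$, hence from ${\rm cl}_{\tau_\xi}(B)$, so ${\rm cl}_{\tau_\xi}(B)\cap F\subseteq F\setminus{\rm int}_{\tau_\xi}(F)$; and $F\setminus{\rm int}_{\tau_\xi}(F)$ splits as $\bigl(F\setminus{\rm int}_{\tau_\alpha}(F)\bigr)\cup\bigl({\rm int}_{\tau_\alpha}(F)\setminus{\rm int}_{\tau_\xi}(F)\bigr)$, whose first piece is the $\tau_\alpha$-boundary of the $\tau_\alpha$-closed set $F$, and whose second piece is $\tau_\alpha$-nowhere dense because, by the weak filtration property (Lemma~\ref{L:frth}(i)), ${\rm int}_{\tau_\xi}(F)$ is $\tau$-dense, hence $\tau_\alpha$-dense, in ${\rm int}_\tau(F)\supseteq{\rm int}_{\tau_\alpha}(F)$, and is itself $\tau_\alpha$-open. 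This already disposes of the case in which $A'$ is closed in some $\tau_\zeta$, $\zeta<\xi$.

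The genuine difficulty is that $A'$ is only ${\mathbf\Pi}^0_{1+\eta}$ with respect to $\sigma$, so writing $X\setminus A'=\bigcup_j A'_j$ with $A'_j\in{\mathbf\Pi}^0_{1+\eta_j}$, $\eta_j<\eta$, presents $B=\bigcup_j(B\cap A'_j)$ as a union of lower-rank layers, and a point $x\in{\rm cl}_{\tau_\xi}(B)\cap A'$ may be a $\tau_\xi$-limit of $B$ only \emph{diagonally}, through layers $A'_j$ with $j\to\infty$; such $x$ lies in no single ${\rm cl}_{\tau_\xi}(B\cap A'_j)$ (those closures meet $A'$ in a meager set by the single-layer estimate, since $A'_j$ is closed modulo a meager set by the inductive hypothesis applied to $A'_j$). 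To control this residual set I would invoke the inductive hypothesis at ranks $\eta_j<\xi$ applied to \emph{solid} subsets of the $A'_j$, exploiting that $B\cap U$ stays $\alpha$-solid whenever $U$ is $\tau_\zeta$-open with $\zeta<\alpha$ (immediate from Lemma~\ref{L:obv}, since relatively $\tau_\alpha$-open subsets of $B\cap U$ are relatively $\tau_\alpha$-open in $B$). Using metrizability of the $\tau_\zeta$ ($\zeta<\alpha$) and Stone's Theorem to pass to $\sigma$-discrete families of $\tau_\zeta$-open sets, I would reconstitute the diagonal pieces over such discrete families into $\alpha$-tame sets whose $\tau_\alpha$-interiors are empty, thereby covering ${\rm cl}_{\tau_\xi}(B)\cap A'$ by countably many $\alpha$-slight sets via the localization Lemma~\ref{L:onsl}(iv); $\alpha$-slightness then yields $\tau_\alpha$-meagerness by Lemma~\ref{L:onsl}(ii). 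This is exactly why the proof is organized around $\alpha$-tame and $\alpha$-slight sets (Lemmas~\ref{L:fsi},~\ref{L:onsl},~\ref{L:onso}) rather than around plain closed and meager sets.

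The main obstacle is precisely this diagonal accumulation: keeping the solidity hypothesis usable after the layers $A'_j$ have been peeled off. The delicate step is producing, on each piece of a $\sigma$-discrete refinement, a solid subset of $B$ lying inside a lower-rank set so that the inductive hypothesis applies, and then verifying that the $\alpha$-tame reassembly has empty $\tau_\alpha$-interior (here Lemma~\ref{L:frth}(i) again converts ``$\tau$-dense interior'' into the needed category statement). I expect the case $\alpha=\omega_1$ to force the full strength of the $\alpha$-tame machinery, since there the simpler description in Lemma~\ref{L:fsi}(ii) and Lemma~\ref{L:onsl}(v) is unavailable and the discrete-union closure property of $\alpha$-tame sets is what makes the reassembly legitimate.
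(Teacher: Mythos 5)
Your base case and your single-layer estimate are correct, and you have located the crux accurately (diagonal accumulation of $B$ through infinitely many lower-rank layers $A'_j$), but at exactly that point the argument stops being a proof: the ``delicate step'' of producing solid sets inside the layers and reassembling over discrete families is announced, not carried out, and as planned it cannot close. Two concrete obstructions. First, solidity does not localize to the layers: $B\cap A'_j$ is not relatively $\tau_\alpha$-open in $B$ (your localization observation covers only $B\cap U$ for $U$ open), so there is in general no $\alpha$-solid subset of $B$ inside $A'_j$ to which the inductive hypothesis could be applied. Second, even where the hypothesis applies, its conclusion --- $\tau_\alpha$-meagerness --- is too weak for the reassembly you describe: to reconstitute pieces over $\tau_\zeta$-discrete families into $\alpha$-tame sets with empty $\tau_\alpha$-interior, you need the pieces to be $\alpha$-slight, and a $\tau_\alpha$-meager set need not be slight (slightness is meagerness \emph{witnessed} by countably many tame sets, which is precisely the form that survives discrete unions). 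So an induction on the statement of Lemma~\ref{L:stab} itself, carrying only meagerness and solidity, does not go through; relatedly, your parenthetical claim that ``$A'_j$ is closed modulo a meager set by the inductive hypothesis'' is not something Lemma~\ref{L:stab} gives --- it is a regularity statement about arbitrary $\mathbf{\Pi}^0_{1+\eta}$ sets, with no solid set in sight, and it is exactly the missing content.

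The paper resolves both problems by inducting on that stronger, solidity-free statement (Lemma~\ref{L:slal}): for $A\in \mathbf{\Pi}^0_{1+\xi}$, the derived $\tau_\xi$-closed set $c_\xi(A)=X\setminus\bigcup\{U \mid U\ \tau_\xi\hbox{-open},\ A\cap U\hbox{ slight}\}$ satisfies that $c_\xi(A)\setminus A$ is slight (meager when $\xi=\alpha$), while $A\setminus c_\xi(A)$ is slight by the localization Lemma~\ref{L:onsl}(iv). The inductive step writes $X\setminus A=\bigcup_n B_n$ with $B_n\in\mathbf{\Pi}^0_{1+\gamma_n}$, $\gamma_n<\xi$, and bounds $c_\xi(A)\setminus A$ by $\bigcup_n \bigl(c_\xi(A)\cap c_{\gamma_n}(B_n)\bigr)\cup\bigl(B_n\setminus c_{\gamma_n}(B_n)\bigr)$; the heart is a two-closed-sets observation, using the weak filtration condition \eqref{E:intap}, that ${\rm int}_{\tau_\alpha}(F_1\cap F_2)=\emptyset$ when $F_1$ is $\tau_{\gamma}$-closed with $A\cap F_1$ slight and $F_2=c_\xi(A)$ meets no $\tau_\xi$-open set with slight trace. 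This replaces your single-layer estimate and absorbs the diagonal automatically: each $c_\xi(A)\cap c_{\gamma_n}(B_n)$ is $\tau_\xi$-closed with empty $\tau_\alpha$-interior, hence slight (nowhere dense at $\xi=\alpha$), and the slightness of the error sets is what propagates through the induction. Solidity then enters exactly once, at the end: since $A\setminus c_\xi(A)$ is covered by $\tau_\xi$-open sets $U$ with $A\cap U$ slight, the $\alpha$-solid set $B\subseteq A$ must satisfy $B\subseteq c_\xi(A)$, so ${\rm cl}_{\tau_\xi}(B)\setminus A\subseteq c_\xi(A)\setminus A$ is $\tau_\alpha$-meager. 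In short: your plan needs the induction hypothesis strengthened from meagerness to slightness and moved off the solid set $B$ onto the operator $c_\xi$; without that reformulation the step you flagged as delicate is a genuine gap, not a routine verification.
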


For the remainder of the proof of Lemma~\ref{L:stab}, we fix $\alpha$ and $(\tau_\xi)_{\xi\leq\alpha}$ as in the statement. 
For $\xi\leq\alpha$, put 
\begin{equation}\label{E:shclin}
{\rm cl}_\xi = {\rm cl}_{\tau_\xi} \;\hbox{ and }\;{\rm int}_\xi = {\rm int}_{\tau_\xi}. 
\end{equation}
Since $\alpha$ is fixed, we write {\bf slight} for $\alpha$-slight.

\begin{lemma}\label{L:slal}
If $A\subseteq X$ is ${\mathbf \Pi}^0_{1+\xi}$ with respect to $\sigma$, for $\xi\leq \alpha$, $\xi<\omega_1$, then there exists a $\tau_\xi$-closed set $F$ such that 
\begin{enumerate}
\item[(i)] if $\xi<\alpha$, then $(A\setminus F)\cup (F\setminus A)$ is slight; 

\item[(ii)] if $\xi=\alpha$, then $F\setminus A$ is $\tau_\alpha$-meager and $A\setminus F$ is the union of $\tau_\alpha$-open sets $U$ such that $U\cap A$ 
is slight.
\end{enumerate}
\end{lemma}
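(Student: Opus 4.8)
The plan is to prove this by transfinite induction on $\xi \leq \alpha$, $\xi < \omega_1$, proceeding through the three cases in the definition of $\mathbf{\Pi}^0_{1+\xi}$ sets: the base case, successor levels, and limit levels, while simultaneously tracking the two-pronged conclusion (case (i) when $\xi < \alpha$, versus case (ii) when $\xi = \alpha$).

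\medskip

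\noindent\textbf{Base case.} For $\xi = 0$, a $\mathbf{\Pi}^0_1$ set with respect to $\sigma$ is $\sigma$-closed, hence $\tau_0$-closed. I would simply take $F = A$. Then $(A\setminus F)\cup(F\setminus A) = \emptyset$, which is slight by Lemma~\ref{L:onsl}(i), settling case (i). For case (ii) (when $\alpha = 0$), $F\setminus A = \emptyset$ is $\tau_\alpha$-meager and $A \setminus F = \emptyset$ trivially satisfies the requirement.

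\medskip

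\noindent\textbf{Successor step.} Suppose $A$ is $\mathbf{\Pi}^0_{1+\xi}$ with $\xi = \eta + 1$. Then $X \setminus A$ is $\mathbf{\Sigma}^0_{1+\xi}$, so $X\setminus A = \bigcup_n A_n$ where each $A_n$ is $\mathbf{\Pi}^0_{1+\eta}$. Applying the inductive hypothesis at level $\eta < \alpha$ to each $A_n$ gives $\tau_\eta$-closed sets $F_n$ with $(A_n \setminus F_n)\cup(F_n\setminus A_n)$ slight. The natural candidate is
\[
F = X \setminus \bigcup_n \mathrm{int}_{\tau_\eta}(F_n),
\]
a set that is $\tau_\xi$-closed because each $\mathrm{int}_{\tau_\eta}(F_n)$ is $\tau_\eta$-open, hence $\tau_{\eta+1}$-open. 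The point is that $F$ approximates $A$ up to the slight symmetric difference accumulated from the $F_n$. I would verify that $(A \setminus F) \cup (F \setminus A)$ is contained in a countable union of slight sets (built from the $A_n \triangle F_n$ together with the boundary sets $F_n \setminus \mathrm{int}_{\tau_\eta}(F_n)$, which are $\tau_\eta$-closed with empty $\tau_\eta$-interior and hence slight by Lemma~\ref{L:onsl}(v)), and then invoke Lemma~\ref{L:onsl}(iii). When $\xi = \alpha$, the symmetric differences are only $\tau_\alpha$-meager rather than slight, and I must instead assemble case (ii) from the weaker conclusion of the inductive hypothesis, using Lemma~\ref{L:onsl}(ii) to convert slightness at level $\eta$ into $\tau_\alpha$-meagerness where needed.

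\medskip

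\noindent\textbf{Limit step, and the main obstacle.} For $\xi$ a countable limit ordinal, $A = \bigcap_n A_n$ with each $A_n$ being $\mathbf{\Pi}^0_{1+\xi_n}$ for some $\xi_n < \xi$, and I obtain $\tau_{\xi_n}$-closed approximants $F_n$ from the inductive hypothesis. Here the right closure to use is $\mathrm{cl}_\xi$, and the candidate would be something like $F = \bigcap_n \mathrm{cl}_\xi(\cdots)$ arranged so that $F$ is $\tau_\xi$-closed while $F \setminus A$ and $A \setminus F$ remain controlled. I expect the \emph{main difficulty} to lie precisely in this limit stage: unlike the successor case, where a single level $\eta$ supplies a metrizable topology and a clean discreteness/Stone-type argument, the limit case must coordinate infinitely many levels $\xi_n \nearrow \xi$, and the passage from ``slight at each finite level'' to ``slight (or meager) at level $\alpha$'' is where Lemma~\ref{L:onsl}(iv) — covering by $\tau_{\xi_n}$-open sets on which $A$ is slight — must be deployed carefully, together with the transfer principle Lemma~\ref{L:frth}(ii) allowing solidity of $B$ at level $\alpha$ to be used at each lower level. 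Keeping the two conclusions (i) and (ii) synchronized through this induction, and ensuring that the boundary terms genuinely have empty $\tau_\alpha$-interior rather than merely empty $\tau_{\xi_n}$-interior, is the delicate bookkeeping I would watch most closely.
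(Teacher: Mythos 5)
There is a genuine gap, and it sits precisely at the point you flag as ``delicate bookkeeping'': in your successor step, the claim that the boundary sets $F_n\setminus {\rm int}_{\tau_\eta}(F_n)$ are ``$\tau_\eta$-closed with empty $\tau_\eta$-interior and hence slight by Lemma~\ref{L:onsl}(v)'' is false. Slightness (that is, $\alpha$-slightness) requires covering by closed sets with empty \emph{$\tau_\alpha$-interior}, and in a (weak) filtration a $\tau_\eta$-closed set that is $\tau_\eta$-nowhere dense typically acquires nonempty interior at later stages --- indeed this is the very mechanism by which filtrations refine. Already in the slowest filtration: if $F$ is $\sigma$-closed with ${\rm int}_\sigma(F)=\emptyset$ but ${\rm int}_\tau(F)\neq\emptyset$, then ${\rm int}_\tau(F)$ is $(\sigma,\tau)_1$-open, so $F\setminus {\rm int}_{\tau_0}(F)=F$ has nonempty $\tau_\alpha$-interior for every $\alpha\geq 1$; and condition \eqref{E:intap} in fact \emph{forces} such promotion to occur densely by stage $\eta+1$. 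Consequently, while your inclusion $A\setminus F\subseteq \bigcup_n(F_n\setminus A_n)$ is fine, the other direction $F\setminus A\subseteq \bigcup_n\bigl((A_n\setminus F_n)\cup (F\cap F_n)\bigr)$ with $F\cap F_n\subseteq F_n\setminus{\rm int}_{\tau_\eta}(F_n)$ is not controlled, and the candidate $F=X\setminus\bigcup_n{\rm int}_{\tau_\eta}(F_n)$ does not work as stated. A warning sign is that your argument never invokes the weak filtration hypothesis \eqref{E:intap}, which the correct proof uses crucially. Your limit step, meanwhile, is left as an acknowledged unresolved sketch. (Two smaller points: Lemma~\ref{L:frth}(ii) and solidity play no role here --- solidity enters only in Lemma~\ref{L:stab}; and at limit levels the uniform presentation is that $X\setminus A$ is a countable union of sets in lower $\mathbf{\Pi}^0$ classes, not $A=\bigcap_n A_n$ with $A_n$ in lower $\mathbf{\Pi}^0$ classes.)

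The paper's proof repairs exactly this defect, in a way that also dissolves the successor/limit dichotomy: it treats all $\xi>0$ at once by writing $X\setminus A=\bigcup_n B_n$ with $B_n\in\mathbf{\Pi}^0_{1+\gamma_n}$, $\gamma_n<\xi$, and takes for $F$ the canonical set $c_\xi(A)=X\setminus\bigcup\{U\mid A\cap U \hbox{ is slight and } U \hbox{ is $\tau_\xi$-open}\}$, rather than any boundary construction built from approximants. That $A\setminus c_\xi(A)$ is slight (for $\xi<\alpha$) follows from Lemma~\ref{L:onsl}(iv); the crux is showing $c_\xi(A)\setminus A$ is small, which reduces inductively to proving ${\rm int}_{\tau_\alpha}\bigl(c_\xi(A)\cap c_{\gamma_n}(B_n)\bigr)=\emptyset$. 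This is where \eqref{E:intap} enters, via the paper's technical observation: if ${\rm int}_{\tau_\alpha}(F_1\cap F_2)\neq\emptyset$ with $F_1$ being $\tau_\gamma$-closed, $\gamma<\xi$, then \eqref{E:intap} yields a $\tau_\xi$-open $V\subseteq F_1$ meeting that interior; slightness of $A\cap F_1$ then makes $A\cap V$ slight, forcing $V\cap F_2=\emptyset$ by the definition of $c_\xi$, a contradiction. In short, the role you assigned to boundary terms with empty low-level interior must instead be played by this filtration-driven emptiness-of-$\tau_\alpha$-interior argument; without some substitute for it, your induction cannot close at either successor or limit stages.
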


\begin{proof} First we make the following technical observation. 

\noindent {\em Let $\gamma<\xi\leq\alpha$, and let $A, F_1, F_2\subseteq X$ be such that $F_1$ is $\tau_\gamma$-closed, 
$F_2$ is $\tau_\xi$-closed, $A\cap F_1$ is slight, and $A\cap V$ is not slight for each $\tau_\xi$-open set $V$ with $V\cap F_2\not=\emptyset$. Then 
\[
{\rm int}_\alpha(F_1\cap F_2)=\emptyset.
\]}

To prove this observation, 
set $U={\rm int}_\alpha(F_1\cap F_2)$ and assume that $U$ is not empty. 
Note that $U$ is $\tau_\alpha$-open and $U\subseteq F_1$. Since $\gamma<\xi\leq \alpha$ and $F_1$ is 
$\tau_\gamma$-closed, by assumption \eqref{E:intap}, there exists a $\tau_\xi$-open set $V$ with 
\begin{equation}\label{E:uv}
V\subseteq F_1 \;\hbox{ and }\; V\cap U\not=\emptyset.
\end{equation}
Since $A\cap F_1$ is assumed to be slight, by the first part of \eqref{E:uv}, 
$A\cap V$ is slight, which implies by our assumption on $F_2$ that $V\cap F_2=\emptyset$.  
Now, by the second part of \eqref{E:uv}, we get $U\not\subseteq F_2$, which leads to a contradiction with the definition 
of $U$. 

For $A\subseteq X$ and $\xi\leq \alpha$, let 
\[
c_\xi(A) = X\setminus \bigcup \{ U\mid A\cap U\hbox{ is slight and } U \hbox{ is $\tau_\xi$-open}\}.
\] 

We show that $F= c_\xi(A)$ fulfills the conclusion of the lemma. 
Obviously $c_\xi(A)$ is $\tau_\xi$-closed. It is clear that $A\setminus c_\alpha(A)$ fulfills the second part of point (ii). By Lemma~\ref{L:onsl}(iv), 
if $\xi<\alpha$, then $A\setminus c_\xi(A)$ is slight.

It remains to see that if $A$ is ${\mathbf \Pi}^0_{1+\xi}$ with respect to $\tau_0$, 
then $c_\xi(A)\setminus A$ is slight, if $\xi<\alpha$, and $c_\xi(A)\setminus A$ is $\tau_\alpha$-meager, 
if $\xi=\alpha$. 
This is done by induction on $\xi$. For $\xi=0$, $A$ is ${\mathbf \Pi}^0_1$ with respect to $\tau_0$, so  $c_0(A)\subseteq A$ by Lemma~\ref{L:onsl}(i). 
Now $c_0(A)\setminus A$ being empty is $\tau_\alpha$-meager if $\alpha=0$ and is slight if $\alpha>0$ by Lemma~\ref{L:onsl}(i). 
Assume we have the conclusion for all $\gamma<\xi$. Let $A$ be in ${\mathbf \Pi}^0_{1+\xi}$ with $\xi>0$. There exists a sequence 
$B_n$, $n\in {\mathbb N}$, with $B_n\in {\mathbf \Pi}^0_{\gamma_n}$, 
for some $\gamma_n<\xi$, 
with $X\setminus A = \bigcup_n B_n$. We have 
\[
c_\xi(A)\setminus A = c_\xi(A)\cap \bigcup_n B_n \subseteq \bigcup_n \bigl(c_\xi(A) \cap c_{\gamma_n}(B_n)\bigr) \cup \bigl( B_n\setminus c_{\gamma_n}(B_n)\bigr).  
\] 
By what we proved above, the set $B_n\setminus c_{\gamma_n}(B_n)$ is slight for each $n$, so also $\tau_\alpha$-meager, by Lemma~\ref{L:onsl}(ii); 
thus, to prove the conclusion of the lemma, by Lemma~\ref{L:onsl}(iii), 
it suffices to show that, for each $n$, $c_\xi(A) \cap c_{\gamma_n}(B_n)$ is slight, if $\xi<\alpha$, and is $\tau_\alpha$-meager, if $\xi=\alpha$. 
Both these goals will be achieved 
if we prove that 
\[
{\rm int}_\alpha (c_\xi(A) \cap c_{\gamma_n}(B_n)) =\emptyset.
\]
This equality will follow from the observation at the beginning of the proof if we show that $A\cap c_{\gamma_n}(B_n)$ is slight and $A\cap V$ 
is not slight for any $\tau_\xi$-open set $V$ 
with $V\cap c_\xi(A)\not=\emptyset$. The second condition holds by the definition of $c_\xi(A)$. To see that $A\cap c_{\gamma_n}(B_n)$ is slight, note that 
\[
A\cap c_{\gamma_n}(B_n)\subseteq c_{\gamma_n}(B_n)\setminus B_n,
\]
and by our inductive assumption $c_{\gamma_n}(B_n)\setminus B_n$ is slight. 
\end{proof}

\begin{proof}[Proof of Lemma~\ref{L:stab}]
Let $A$ be ${\mathbf \Pi}^0_{1+\xi}$, $\xi\leq\alpha$, and let $B\subseteq A$ be $\alpha$-solid. By Lemmas~\ref{L:slal} and \ref{L:onsl}(ii), 
independently of whether 
$\xi<\alpha$ or $\xi=\alpha$, there exists a $\tau_\xi$-closed 
set $F$ such that $F\setminus A$ is $\tau_\alpha$-meager and 
$A\setminus F$ is covered by $\tau_\alpha$-open sets $U\subseteq X\setminus F$ with $A\cap U$ slight. 
Note that this last statement together with the assumption that $B$ is $\alpha$-solid
immediately imply that $B\setminus F$ is empty.  
Thus, we have $B\subseteq F$. Since $F$ is $\tau_\xi$-closed, it follows that ${\rm cl}_\xi(B)\subseteq F$, which gives 
\[
{\rm cl}_\xi(B)\setminus A\subseteq F\setminus A. 
\] 
Since $F\setminus A$ is $\tau_\alpha$-meager, we have that ${\rm cl}_\xi(B)\setminus A$ is $\tau_\alpha$-meager, as required. 
\end{proof}

\begin{proof}[Proof of Theorem~\ref{T:stab2}]
We continue with our convention \eqref{E:shclin}. 
First, we note that each non-empty $\tau$-open set $B$ is $\alpha$-solid. Indeed, let each $F_n$, $n\in {\mathbb N}$, be $\alpha$-tame. 
Assume that $\bigcup_n F_n$  
contains a non-empty relatively $\tau_\alpha$-open subset of $B$. So $\bigcup_nF_n$ contains a non-empty $\tau$-open set. 
Since, by Lemma~\ref{L:fsi}(i), each $F_n$ is a countable union of $\tau$-closed sets 
and $\tau$ is Baire, we have ${\rm int}_{\tau}(F_n)\not= \emptyset$ for some $n$. By Lemma~\ref{L:frth}(i), 
we have ${\rm int}_{\alpha}(F_n)\not= \emptyset$. Thus, $B$ is $\alpha$-solid.

Now we show that if $A\subseteq X$ is a $\tau$-neighborhood of $x$, 
then $A$ is $\tau_\alpha$-comeager in a $\tau_\alpha$-neighborhood of $x$. We can assume that $A$ is ${\mathbf \Pi}^0_{1+\xi}$ for some $\xi<\alpha$. 
Note that $B={\rm int}_{\tau}(A)$ is $\tau$-open and $x\in B$. Since, by what was proved above, 
$B$ is $\alpha$-solid, by Lemma~\ref{L:stab}, we have that ${\rm cl}_\xi(B)\setminus A$ is $\tau_\alpha$-meager. Put $F= {\rm cl}_\xi(B)$ and note that $F$ 
is $\tau_\xi$-closed. By assumption \eqref{E:intap2}, 
we get 
\[
{\rm int}_\alpha(F) = {\rm int}_{\tau}(F)\supseteq B\ni x.
\]  
Clearly we also have 
\[
{\rm int}_\alpha(F) \setminus A\subseteq F\setminus A, 
\]
and this last set is $\tau_\alpha$-meager. Thus, ${\rm int}_\alpha(F)$ is the desired $\tau_\alpha$-neighborhood of $x$. 
 
The above observation implies the conclusion of the theorem by Lemma~\ref{L:toe} 
applied to ${\rm id}_X\colon (X, \tau)\to (X, \tau_{\alpha})$. 
\end{proof}

Recall the notation $\alpha\oplus 1$ from \eqref{E:oplu}. 

\begin{corollary}\label{C:stom}
Let $\sigma\subseteq \tau$ be topologies, 
with $\tau$ being regular and Baire. For an ordinal $\alpha\leq\omega_1$, let 
$(\tau_\xi)_{\xi<\alpha\oplus 1}$ be a filtration from $\sigma$ to $\tau$, with $\tau_\xi$ completely metrizable for each $\xi<\alpha\oplus 1$. 

If $\tau$ has a neighborhood basis consisting of sets that are in 
$\bigcup_{\xi<\alpha}{\mathbf \Pi}^0_{1+\xi}$ with respect to $\sigma$, then $\tau=\bigvee_{\xi<\alpha}\tau_\xi$.
\end{corollary}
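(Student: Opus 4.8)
The plan is to reduce the statement to Theorem~\ref{T:stab2} by completing the given filtration to one of length $\alpha+1$ and then checking, by hand, the single hypothesis of that theorem which is not already granted: that the topology sitting at the top is Baire. Throughout, recall that $\bigvee_{\xi<\alpha}\tau_\xi\subseteq\tau$ is automatic from \eqref{E:cot}, so only the reverse inclusion is at stake, and that the metrizability required below the top is weaker than the complete metrizability we are given.

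The bookkeeping is dictated by the two clauses of $\alpha\oplus1$. If $\alpha$ is a successor, then $\alpha\oplus1=\alpha+1$ and the given sequence already provides a topology $\tau_\alpha$ at stage $\alpha$; here I would apply Theorem~\ref{T:stab2} verbatim to $(\tau_\xi)_{\xi\le\alpha}$, using that $\tau_\xi$ is metrizable for $\xi<\alpha$, that $\tau_\alpha$ is completely metrizable hence Baire, and that the neighborhood basis hypothesis is precisely the one in Theorem~\ref{T:stab2}. If $\alpha$ is a limit, then $\alpha\oplus1=\alpha$ and no topology is given at stage $\alpha$; I would set $\tau_\alpha:=\bigvee_{\xi<\alpha}\tau_\xi$ and first check that $(\tau_\xi)_{\xi\le\alpha}$ is still a filtration from $\sigma$ to $\tau$. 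The only new instance of \eqref{E:intap2} is at stage $\alpha$: for $F$ that is $\tau_\xi$-closed with $\xi<\alpha$, the chain
\[
{\rm int}_{\tau_{\xi+1}}(F)\subseteq{\rm int}_{\tau_\alpha}(F)\subseteq{\rm int}_{\tau}(F)
\]
together with ${\rm int}_{\tau_{\xi+1}}(F)={\rm int}_\tau(F)$ — which holds because $\xi+1<\alpha$, so $\tau_{\xi+1}$ belongs to the original filtration — pinches ${\rm int}_{\tau_\alpha}(F)$ to ${\rm int}_\tau(F)$. Granting that $\tau_\alpha$ is Baire, Theorem~\ref{T:stab2} then yields $\tau_\alpha=\tau$, i.e. $\bigvee_{\xi<\alpha}\tau_\xi=\tau$.

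The crux, and the only point genuinely outside Theorem~\ref{T:stab2}, is to prove that the join $\tau_\alpha=\bigvee_{\xi<\alpha}\tau_\xi$ is Baire; this is where complete metrizability is used. I would argue this directly and uniformly for $\alpha\le\omega_1$. Let $U$ be nonempty $\tau_\alpha$-open and let $D_n$, $n\in{\mathbb N}$, be $\tau_\alpha$-dense open. Since the $\tau_\xi$ are nested, every basic $\tau_\alpha$-open set is $\tau_\gamma$-open for a single $\gamma<\alpha$. Build nonempty basic $\tau_\alpha$-open sets $G_0\supseteq G_1\supseteq\cdots$ with $G_{n+1}\subseteq D_{n+1}\cap G_n$, where $G_n$ is $\tau_{\gamma_n}$-open with $\gamma_n$ increasing, and, working inside a fixed complete metric $d_{\gamma_n}$ of the completely metrizable topology $\tau_{\gamma_n}$, arrange $C_n:={\rm cl}_{\tau_{\gamma_n}}(G_n)$ to sit inside the previous set and to have $d_{\gamma_n}$-diameter at most $2^{-n}$. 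The sets $C_n$ are nested, and choosing $x_n\in C_n$ yields a sequence that is $d_{\gamma_n}$-Cauchy for every $n$; by completeness of each $d_{\gamma_n}$ and uniqueness of limits in the coarser topologies, the $x_m$ converge to a single point $y$ in every $\tau_{\gamma_n}$, whence $y\in\bigcap_nC_n\subseteq U\cap\bigcap_nD_n$. Thus $\bigcap_nD_n$ is dense and $\tau_\alpha$ is Baire. (When $\alpha<\omega_1$ one may instead note that the countably many $\tau_\xi$ admit complete metrics $d_n\le1$ and that $\sum_n2^{-n}d_n$ is a complete metric for $\tau_\alpha$, so $\tau_\alpha$ is in fact completely metrizable.)

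With Baireness of $\tau_\alpha$ in hand, all hypotheses of Theorem~\ref{T:stab2} are met and it delivers $\tau_\alpha=\tau$; in the limit case this is the asserted equality $\tau=\bigvee_{\xi<\alpha}\tau_\xi$, and the successor case is the direct application noted above. I expect the Baireness of the join to be the main obstacle: it is the one step not already packaged in Theorem~\ref{T:stab2}, and it is exactly here that completeness of the metrics, the nested structure of the filtration, and the fact that only countably many stages $\gamma_n$ enter the construction have to be combined — which is why the hypothesis must be complete metrizability rather than mere metrizability, and why the case $\alpha=\omega_1$ (an uncountable, possibly non-metrizable join) is handled on the same footing as the countable limit case.
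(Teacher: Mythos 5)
Your overall route is exactly the paper's: the successor case is a verbatim application of Theorem~\ref{T:stab2}; in the limit case you set $\tau_\alpha=\bigvee_{\xi<\alpha}\tau_\xi$, check that $(\tau_\xi)_{\xi\le\alpha}$ is still a filtration from $\sigma$ to $\tau$ (your pinching argument via ${\rm int}_{\tau_{\xi+1}}(F)={\rm int}_\tau(F)$, using $\xi+1<\alpha$, is correct, and the paper leaves this step implicit), and reduce everything to showing that the join is Baire. That Baireness claim is also the paper's only real work: it proves, by an inductive construction of topologies $t_i$ with complete metrics $d_i\le 1$ and open sets $U_i$, that the join of any chain of completely metrizable topologies is Baire. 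Your parenthetical remark for $\alpha<\omega_1$ (that $\sum_n 2^{-n}d_n$ is a complete metric for the countable join of a nested sequence) is also correct.

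However, your execution of the Baireness argument has a genuine gap at the Cauchy step. You only require $C_n={\rm cl}_{\tau_{\gamma_n}}(G_n)$ to have $d_{\gamma_n}$-diameter at most $2^{-n}$ --- smallness in its \emph{own} metric --- and then assert that the points $x_m\in C_m$ form a $d_{\gamma_n}$-Cauchy sequence for each fixed $n$. This does not follow. For $m,m'\ge N>n$ all you know is $x_m,x_{m'}\in C_N$, and you have no bound on the $d_{\gamma_n}$-diameter of $C_N$: nestedness of the topologies gives no uniform comparison between the metrics, so a set of tiny $d_{\gamma_N}$-diameter can be large in $d_{\gamma_n}$. (Already on $\mathbb R$ with a single topology, the complete metrics $|x-y|$ and $|x^3-y^3|$ show that smallness in one metric does not transfer to another; with genuinely finer topologies the discrepancy can be arranged at every stage.) What your construction does give is only that the tail from stage $n$ on lies in $C_n$, hence has $d_{\gamma_n}$-oscillation at most $2^{-n}$ --- a fixed bound for fixed $n$ that never shrinks along the tail, which is boundedness, not Cauchyness; one also checks that passing to the sum metric $\sum_i 2^{-i}d_{\gamma_i}$ does not rescue this. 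The repair is local and is precisely what the paper builds into its inductive hypotheses: at stage $j$ require the $d_{\gamma_i}$-diameter of $U_j$ to be at most $\frac{1}{j+1}$ for \emph{all} $i<j$ --- finitely many conditions, satisfiable because $\tau_{\gamma_j}$ refines each earlier $\tau_{\gamma_i}$. With that strengthening, either your pointwise argument (Cauchy in every $d_{\gamma_n}$, limits agreeing by Hausdorffness of the coarsest topology, $y\in\bigcap_n C_n$ by $\tau_{\gamma_n}$-closedness of $C_n$) goes through, or one argues as the paper does, in the single completely metrizable topology $t_\infty=\bigvee_i t_i$ with metric $d_\infty=\sum_i 2^{-i}d_i$, where the closures ${\rm cl}_{t_\infty}(U_i)$ are nested closed sets of vanishing $d_\infty$-diameter and so intersect in one point.
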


\begin{proof}  If $\alpha$ is a successor ordinal, then $\alpha\oplus 1 =\alpha+1$, and the conclusion is immediate from Theorem~\ref{T:stab2}. 

Assume now that $\alpha$ is limit. (The corollary is tautologically true for $\alpha=0$.) We then have $\alpha\oplus 1 =\alpha$. Let 
\[
\tau_{\alpha}=\bigvee_{\xi<\alpha}\tau_\xi.
\]
Note that $(\tau_\xi)_{\xi\leq\alpha}$ is a filtration from $\sigma$ to $\tau$. 
If we show that $\tau_{\alpha}$ is Baire, Theorem~\ref{T:stab2} will imply that $\tau=\tau_{\alpha}$ as required. 
Therefore, it suffices to check the following claim. 

\begin{claim}
Let $T$ be a set of completely metrizable topologies linearly ordered by inclusion. Then $\bigvee T$ is a Baire topology. 
\end{claim}

\noindent {\em Proof of Claim.} Let $F_i$, $i\in {\mathbb N}$, be a sequence of sets that are nowhere dense with respect to $\bigvee T$, and let $U$ be a non-empty 
set that is open with respect to $\bigvee T$. Since $T$ is linearly ordered by inclusion, we can assume that $U\in t$ for some $t\in T$. 

We inductively construct 
topologies $t_i\in T$, $i\in {\mathbb N}$, with a complete metric $d_i\leq 1$ inducing $t_i$. We also construct non-empty sets $U_i\in t_i$. All this is arranged so that 
$U_i\cap F_i=\emptyset$, $d_i{\rm -diam}(U_j)\leq \frac{1}{j+1}$, $t\subseteq t_i\subseteq t_j$, ${\rm cl}_{t_i}(U_j)\subseteq U_i\subseteq U$ for all natural numbers $i<j$. 
The construction of these objects is easy using the fact that $T$ is linearly ordered by inclusion. 

Consider now the topology $t_\infty=\bigvee_{i\in {\mathbb N}} t_i$, which is completely metrizable as witnessed by the metric $d_\infty = \sum_i 2^{-i}d_i$. Note that the 
sets ${\rm cl}_{t_\infty}(U_i)$ are non-empty, $t_\infty$-closed, decreasing, and their $d_\infty$-diameters tend to $0$. It follws that 
their intersection consists of precisely one point $x_\infty$. For each $i$ we have 
\[
x_\infty\in {\rm cl}_{t_\infty}(U_{i+1})\subseteq {\rm cl}_{t_i}(U_{i+1})\subseteq U_i. 
\]
Thus, $x_\infty\in U\setminus \bigcup_{i\in {\mathbb N}} F_i$. 

We just showed that the complement of $\bigcup_{i \in {\mathbb N}}F_i$ is dense with respect to $\bigvee T$, and the claim follows.  
\end{proof}

\section{Upper approximations of equivalence relations}\label{S:eqr}

Fix $(\tau_\xi)_{\xi<\rho}$, a transfinite sequence of topologies as in \eqref{E:cot}. 
Let $E$ be an equivalence relation on $X$. There exists a natural way of producing 
a transfinite sequence of upper approximations of $E$ using $(\tau_\xi)_{\xi<\rho}$. For each $\xi<\rho$ define the equivalence relation $E_\xi$ on 
$X$ by letting 
\[
xE_\xi y\;\hbox{ if and only if }\; {\rm cl}_{\tau_\xi}([x]_E)= {\rm cl}_{\tau_\xi}([y]_E). 
\]
Note that 
\begin{equation}\label{E:down}
E_0\supseteq E_1\supseteq\cdots \supseteq E_\xi\supseteq \cdots \supseteq E.
\end{equation}
The main question is when the transfinite sequence of equivalence relations in \eqref{E:down} stabilizes at $E$. Theorem~\ref{T:eqte} gives an answer.

Before we state it we need a definition. Let $(\tau_\xi)_{\xi<\rho}$ be a transfinite sequence of topologies with \eqref{E:cot}. 
Recall the definition of $\alpha$-solid for $\alpha<\rho$ from Section~\ref{S:stdes}. We call a set {\bf solid} if it is $\alpha$-solid for each $\alpha<\rho$. 
(For more on this notion, see Remark 2 following the statement of Theorem~\ref{T:eqte}.) 
Recall notation $\alpha\oplus 1$ from \eqref{E:oplu}. 

\begin{theorem}\label{T:eqte}
Let $(\tau_\xi)_{\xi<\alpha\oplus 1}$, $\alpha \leq\omega_1$, be a filtration from $\sigma$. 
Assume $\tau_\xi$ is completely metrizable for each $\xi<\alpha$. 
Let $E$ be an equivalence relation on $X$ whose equivalence classes are solid. 

If all equivalence classes of $E$ are in $\bigcup_{\xi<\alpha} {\bf \Pi}^0_{1+\xi}$ with respect to $\sigma$, 
then $E=\bigcap_{\xi<\alpha}E_\xi$. 
\end{theorem}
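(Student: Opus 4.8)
The plan is to prove only the nontrivial inclusion $\bigcap_{\xi<\alpha}E_\xi\subseteq E$, since the reverse inclusion is precisely \eqref{E:down}. So I fix $x,y\in X$ with ${\rm cl}_{\tau_\xi}([x]_E)={\rm cl}_{\tau_\xi}([y]_E)$ for every $\xi<\alpha$ and aim to show $[x]_E=[y]_E$. Arguing by contradiction, I assume $[x]_E\neq[y]_E$, so that the two classes are disjoint, and I will derive a contradiction with the solidity of $[y]_E$.

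First I fix $\xi_0<\alpha$ with $[x]_E\in{\mathbf \Pi}^0_{1+\xi_0}$ with respect to $\sigma$, and I set $\beta=\xi_0+1$. A key preliminary point is that $\beta$ is a legitimate working level: when $\alpha$ is a successor one has $\beta=\xi_0+1\le\alpha<\alpha\oplus1$, while when $\alpha$ is a limit one has $\beta=\xi_0+1<\alpha=\alpha\oplus1$, so in all cases $\beta<\alpha\oplus1$. Consequently $(\tau_\xi)_{\xi\le\beta}$ is a truncation of the given filtration from $\sigma$, hence itself a (weak) filtration from $\sigma$ to $\tau_\beta$, with $\tau_\xi$ completely metrizable for $\xi<\beta$; and since the classes of $E$ are solid, each class is in particular $\beta$-solid. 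This uniform choice $\beta=\xi_0+1$ is exactly what lets me avoid splitting into the successor and limit cases: the delicate situation is the limit case, where $\alpha\oplus1=\alpha$ and so the word \emph{solid} does not by itself deliver $\alpha$-solidity; working one step above $\xi_0$ sidesteps this entirely.

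Next I run the core computation at level $\beta$, applying Lemma~\ref{L:slal}(i) to $A=[x]_E$. Here it is essential to use Lemma~\ref{L:slal}(i) rather than Lemma~\ref{L:stab}: since $\xi_0<\beta$ we are in case (i), which produces a $\tau_{\xi_0}$-closed set $F$ for which $([x]_E\setminus F)\cup(F\setminus[x]_E)$ is \emph{$\beta$-slight}, whereas the conclusion of Lemma~\ref{L:stab} would only be $\tau_\beta$-meagerness, which is too weak to contradict $\beta$-solidity. Because $X\setminus F$ is $\tau_{\xi_0}$-open, hence $\tau_\beta$-open, the set $[x]_E\setminus F$ is a relatively $\tau_\beta$-open, $\beta$-slight subset of $[x]_E$; as $[x]_E$ is $\beta$-solid, Lemma~\ref{L:obv} forces $[x]_E\setminus F=\emptyset$, that is, $[x]_E\subseteq F$. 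Since $F$ is $\tau_{\xi_0}$-closed, this gives ${\rm cl}_{\tau_{\xi_0}}([x]_E)\subseteq F$, whence ${\rm cl}_{\tau_{\xi_0}}([x]_E)\setminus[x]_E\subseteq F\setminus[x]_E$ is $\beta$-slight.

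I then close the loop using $x\mathrel{E_{\xi_0}}y$: from ${\rm cl}_{\tau_{\xi_0}}([y]_E)={\rm cl}_{\tau_{\xi_0}}([x]_E)$ together with $[y]_E\subseteq{\rm cl}_{\tau_{\xi_0}}([y]_E)$ and the assumed disjointness $[y]_E\cap[x]_E=\emptyset$, I obtain $[y]_E\subseteq{\rm cl}_{\tau_{\xi_0}}([x]_E)\setminus[x]_E$, so $[y]_E$ is $\beta$-slight. But $[y]_E$ is non-empty and is a relatively $\tau_\beta$-open subset of itself, so its $\beta$-solidity contradicts its being $\beta$-slight, once more by Lemma~\ref{L:obv}. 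This contradiction yields $[x]_E=[y]_E$, hence $xEy$, completing the inclusion and the proof. The step I expect to be the genuine obstacle is the bookkeeping of the previous two points at once: securing the stronger \emph{slight} (not merely meager) conclusion by invoking Lemma~\ref{L:slal}(i), while simultaneously choosing the level $\beta=\xi_0+1$ so as to remain below $\alpha\oplus1$ and retain $\beta$-solidity in the limit case.
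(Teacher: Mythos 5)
Your proof is correct, and it takes a genuinely different route from the paper's. The applications of Lemma~\ref{L:slal} are legitimate: the truncation $(\tau_\gamma)_{\gamma\leq\beta}$ with $\beta=\xi_0+1<\alpha\oplus 1$ is a filtration from $\sigma$ to $\tau_\beta$, hence a weak filtration, with $\tau_\gamma$ metrizable for all $\gamma<\beta$, so the standing hypotheses for Lemma~\ref{L:slal} hold with $\beta$ in the role of $\alpha$ and $\xi_0<\beta$ putting you in case (i); and your slight/solid bookkeeping via Lemma~\ref{L:obv} goes through (subsets of $\beta$-slight sets are $\beta$-slight, and distinct classes are disjoint). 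The paper, by contrast, does not argue by contradiction from disjointness: it applies Lemma~\ref{L:stabre} --- the refinement of Lemma~\ref{L:stab} obtained by relativizing to the subspace ${\rm cl}_{\tau_{\xi}}([x]_E)$, which is the one place the paper actually needs the weak-filtration notion --- to conclude that $[x]_E$ and $[y]_E$ are each \emph{relatively} $\tau_\xi$-comeager in their common closure; since that closure is $\tau_\xi$-closed and $\tau_\xi$ is completely metrizable, it is a Baire space, so the two comeager classes must intersect, giving $xEy$ directly. Your endgame replaces this Baire-category intersection with the solidity hypothesis on $[y]_E$ itself: you retain the stronger conclusion ``$\beta$-slight'' from case (i) of Lemma~\ref{L:slal} (correctly observing that mere $\tau_\beta$-meagerness would not contradict $\beta$-solidity, e.g.\ in the edge case $\beta=\alpha$ where $\tau_\alpha$ is not assumed Baire or metrizable), use solidity of $[x]_E$ to force $[x]_E\subseteq F$, hence ${\rm cl}_{\tau_{\xi_0}}([x]_E)\setminus[x]_E$ slight, and then disjointness places $[y]_E$ inside this slight set, contradicting its solidity. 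What each approach buys: yours is shorter, avoids Lemma~\ref{L:stabre} and the subspace relativization entirely, and uses only metrizability of the $\tau_\gamma$ --- complete metrizability enters the paper's proof solely through the Baire property of the closed subspace, so your argument in fact establishes the theorem under a formally weaker hypothesis; the paper's route, in exchange, yields the extra quantitative information that each class is relatively comeager in its own $\tau_\xi$-closure. The level choice $\beta=\xi_0+1<\alpha\oplus 1$ that you single out as the key obstacle is not new, however: it is the same device the paper uses (``Note that $\xi+1<\alpha\oplus 1$''), applied there to secure $(\xi+1)$-solidity for Lemma~\ref{L:stabre}.
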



\begin{remark}
We keep the notation and assumptions as Theorem~\ref{T:eqte}. 

{\bf 1.} If $\alpha$ is a successor ordinal, say $\alpha=\beta+1$, then the conclusion of Theorem~\ref{T:eqte} reads: if all equivalence classes of $E$ are in 
${\bf \Pi}^0_{1+\beta}$ with respect to $\sigma$, 
then $E=E_\beta$. 

{\bf 2.} We point out here that being solid, under the assumptions of Theorem~\ref{T:eqte}, can be phrased so that it involves 
only sets that are $\tau_\xi$-closed for appropriate $\xi$ rather than the more complicated $\alpha$-tame sets.

If $\alpha$ is a successor ordinal, then $A$ being solid means $\alpha$-solid. So, under the assumption that $\tau_\xi$ is metrizable 
for each $\xi<\alpha$, by Lemma~\ref{L:onso}, $A$ being solid 
is equivalent to the following condition: for each countable family $\mathcal F$ with every $F\in {\mathcal F}$ being  
$\tau_\xi$-closed, where $\xi<\alpha$ depends on $F$,  and 
with $\bigcup {\mathcal F}$ containing a non-empty relatively 
$\tau_\alpha$-open subset of $A$, we have ${\rm int}_{\tau_\alpha}(F)\not= \emptyset$ for some $F\in {\mathcal F}$. 

In the same spirit, if $\alpha$ is limit, then $A$ being solid means $\alpha'$-solid for each $\alpha'<\alpha$. So, again, under the assumption that $\tau_\xi$ is metrizable 
for each $\xi<\alpha$, by Lemma~\ref{L:onso}, $A$ being solid 
is equivalent to the following condition: for each $\alpha'<\alpha$, for each countable family $\mathcal F$ with every $F\in {\mathcal F}$ being  
$\tau_\xi$-closed, where $\xi<\alpha'$ depends on $F$,  and 
with $\bigcup {\mathcal F}$ containing a non-empty relatively 
$\tau_{\alpha'}$-open subset of $A$, we have ${\rm int}_{\tau_{\alpha'}}(F)\not= \emptyset$ for some $F\in {\mathcal F}$. 
\end{remark}

We will need a refinement of a special case of Lemma~\ref{L:stab}. Our gain consists of getting ${\rm cl}_{\tau_\alpha}(A)\setminus A$ to be relatively $\tau_\alpha$-meager 
in ${\rm cl}_{\tau_\alpha}(A)$ rather than just $\tau_\alpha$-meager. In exchange, we have to make a stronger assumption 
that $A$ be $(\alpha+1)$-solid rather than $\alpha$-solid 
(see Lemma~\ref{L:frth}(ii)). Recall the definition of weak filtration from Section~\ref{S:stdes}. 
The proof of the lemma below is the place where we need to use weak filtrations 
instead of filtrations.

\begin{lemma}\label{L:stabre}
Let $\alpha<\omega_1$. 
Let $(\tau_\xi)_{\xi\leq\alpha+1}$ be a filtration from $\sigma$, with $\tau_\xi$ metrizable for each $\xi\leq \alpha$. 
If $A$ is $(\alpha+1)$-solid and ${\bf \Pi}^0_{1+\alpha}$ with respect to $\sigma$, 
then ${\rm cl}_{\tau_{\alpha}}(A)\setminus A$ is relatively $\tau_\alpha$-meager in ${\rm cl}_{\tau_\alpha}(A)$. 
\end{lemma}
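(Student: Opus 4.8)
The plan is to adapt the proof of Lemma~\ref{L:stab} (via Lemma~\ref{L:slal}), but to squeeze out a sharper conclusion by exploiting the extra hypothesis that $A$ is $(\alpha+1)$-solid rather than merely $\alpha$-solid. Since $A$ is ${\mathbf \Pi}^0_{1+\alpha}$ with respect to $\sigma$, I would first apply Lemma~\ref{L:slal} with $\xi=\alpha$ to produce a $\tau_\alpha$-closed set $F=c_\alpha(A)$ such that $F\setminus A$ is $\tau_\alpha$-meager and $A\setminus F$ is a union of $\tau_\alpha$-open sets $U\subseteq X\setminus F$ with $A\cap U$ slight (here slight means $(\alpha+1)$-slight, since we are now working inside the filtration $(\tau_\xi)_{\xi\le\alpha+1}$ and the relevant ambient level is $\alpha+1$). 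I would be careful throughout to distinguish $\alpha$-slight from $(\alpha+1)$-slight: the invocation of solidity needs the stronger, $(\alpha+1)$-solid notion.

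The key new step is to show that in fact $A\setminus F=\emptyset$, i.e.\ $A\subseteq F$, so that ${\rm cl}_{\tau_\alpha}(A)\subseteq F$. The point is that $A\setminus F$ is covered by $\tau_\alpha$-open sets $U$ with $A\cap U$ slight, and $A\cap U$ is a non-empty relatively $\tau_\alpha$-open subset of $A$ whenever $U$ meets $A$; combined with the $(\alpha+1)$-solidity of $A$ and Lemma~\ref{L:obv} (in the form that an $(\alpha+1)$-solid set has no non-empty relatively open slight subset), this forces each such $U$ to miss $A$, so $A\setminus F=\emptyset$. Hence $A\subseteq F$ and therefore ${\rm cl}_{\tau_\alpha}(A)\subseteq F$, giving
\[
{\rm cl}_{\tau_\alpha}(A)\setminus A\subseteq F\setminus A.
\]
So far this only recovers that the difference is $\tau_\alpha$-meager, which is just Lemma~\ref{L:stab}; the refinement to \emph{relative} meagerness in ${\rm cl}_{\tau_\alpha}(A)$ is what requires genuine new work.

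To upgrade to relative meagerness I would localize everything inside $F={\rm cl}_{\tau_\alpha}(A)$. The natural route is to replay the argument of Lemma~\ref{L:slal}(ii)/\ref{L:stab} but tracking witnesses relatively to $F$: writing $X\setminus A=\bigcup_n B_n$ with $B_n\in{\mathbf\Pi}^0_{\gamma_n}$, $\gamma_n<\alpha$, one decomposes $F\setminus A=F\cap\bigcup_nB_n$ and shows each piece $F\cap c_{\gamma_n}(B_n)$ has empty $\tau_{\alpha}$-interior \emph{relative to} $F$, equivalently empty $\tau_{\alpha+1}$-interior relative to $F$ by Lemma~\ref{L:frth}(i). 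This is exactly where the weak-filtration mechanism of the technical observation in Lemma~\ref{L:slal} is used: given a relatively $\tau_\alpha$-open $U\subseteq F$ contained in a low-level closed set $F_1$, assumption~\eqref{E:intap} produces a $\tau_\alpha$-open $V\subseteq F_1$ meeting $U$, and slightness of $A\cap V$ together with solidity yields a contradiction, now carried out inside $F$ so that the conclusion is relative $\tau_\alpha$-meagerness of ${\rm cl}_{\tau_\alpha}(A)\setminus A$ in ${\rm cl}_{\tau_\alpha}(A)$. The main obstacle I anticipate is precisely this bookkeeping: ensuring that ``relatively $\tau_\alpha$-open in $F$'' can be fed into the solidity/slightness machinery, which is phrased for relatively $\tau_\alpha$-open subsets of the solid set $A$ rather than of $F={\rm cl}_{\tau_\alpha}(A)$. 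I would resolve this by using that $A$ is $\tau_\alpha$-dense in $F={\rm cl}_{\tau_\alpha}(A)$, so every non-empty relatively $\tau_\alpha$-open subset of $F$ meets $A$ in a non-empty relatively $\tau_\alpha$-open subset of $A$, letting me transfer the solidity of $A$ to a statement about relative $\tau_\alpha$-interiors inside $F$.
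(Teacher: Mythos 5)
Your overall strategy is the right one and is, in spirit, the paper's: localize inside $X'={\rm cl}_{\tau_\alpha}(A)$ and run the machinery of Lemma~\ref{L:stab} relatively to $X'$, using $(\alpha+1)$-solidity as the extra input. Your first step (deducing $A\subseteq c_\alpha(A)$ from $(\alpha+1)$-solidity via Lemma~\ref{L:obv}) is sound, modulo two small points: you conflate $F=c_\alpha(A)$ with ${\rm cl}_{\tau_\alpha}(A)$ (one only gets ${\rm cl}_{\tau_\alpha}(A)\subseteq F$, and the relativization must happen inside ${\rm cl}_{\tau_\alpha}(A)$, not inside $F$), and with ``slight'' meaning $(\alpha+1)$-slight your Lemma~\ref{L:slal} application yields $\tau_{\alpha+1}$-meagerness of $F\setminus A$, not $\tau_\alpha$-meagerness; the absolute $\tau_\alpha$ statement should instead come from Lemma~\ref{L:stab} applied to the truncated filtration, using that $(\alpha+1)$-solid implies $\alpha$-solid (Lemma~\ref{L:frth}(ii)). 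These are repairable. Also, once the relative setting is in place, there is no need to replay the induction of Lemma~\ref{L:slal} by hand: the paper simply applies Lemma~\ref{L:stab} verbatim to $X'$ with the restricted topologies $\tau'_\xi=\tau_\xi$ restricted to $X'$, taking $\xi=\alpha$ and $B=A$, after checking its hypotheses for $(\tau'_\xi)_{\xi\leq\alpha}$.

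The genuine gap is in how you propose to check those hypotheses. Replaying the technical observation of Lemma~\ref{L:slal} relatively to $X'$ requires that the \emph{restricted} sequence $(\tau'_\xi)_{\xi\leq\alpha}$ satisfy the weak filtration condition \eqref{E:intap}, and your claim that \eqref{E:intap} ``produces a $\tau_\xi$-open $V\subseteq F_1$ meeting $U$'' fails when $U$ is only relatively $\tau_\alpha$-open in $X'$: condition \eqref{E:intap} controls absolute interiors, and ${\rm int}_{\tau'_\alpha}(F_1\cap X')$ can be non-empty while ${\rm int}_{\tau_\alpha}(F_1)=\emptyset$ (for instance $X'\subseteq F_1$ with $F_1$ $\tau_\alpha$-nowhere dense --- precisely the regime where relative meagerness says more than absolute meagerness, i.e., the case the lemma is about). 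Your proposed fix, the $\tau_\alpha$-density of $A$ in $X'$, only transfers \emph{solidity} statements from relatively open subsets of $X'$ to relatively open subsets of $A$; it does nothing for this interior-transfer problem. The paper's proof devotes its entire second half to exactly this point: it sets $N={\rm int}_{\tau_{\alpha+1}}(X')$, uses the solidity of $A$ to see that $N$ is $\tau_\alpha$-dense in $X'$, converts relative interiors over $N$ into absolute $\tau_{\alpha+1}$-interiors via the inclusions \eqref{E:pri}--\eqref{E:ir} (where $\tau_{\alpha+1}$-openness of $N$ is essential), and then pushes down to level $\beta$ using ${\rm int}_{\tau_{\alpha+1}}(F)\subseteq{\rm int}_{\tau_\beta}(F)$, which comes from the filtration property \eqref{E:intap2} at top level $\alpha+1$. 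This is the one place where the extra topology $\tau_{\alpha+1}$ and the weak-filtration notion are genuinely needed; your sketch invokes $\tau_{\alpha+1}$ only through the solidity hypothesis, so the relativized induction, as written, does not go through.
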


\begin{proof}
The conclusion will follow from Lemma~\ref{L:stab}. Put $X' = {\rm cl}_{\tau_\alpha}(A)$, and let $\tau_\xi'$ be $\tau_\xi$ restricted to $X'$. 
Note that $(\tau_\xi')_{\xi\leq\alpha}$ is a transfinite sequence of topologies on $X'$ fulfilling \eqref{E:cot} 
with $\tau_\xi'$ metrizable for $\xi\leq\alpha$.

First, we check that $A$ being $(\alpha+1)$-solid with respect to $(\tau_\xi)_{\xi\leq \alpha+1}$ implies that it is $\alpha$-solid with respect to $(\tau'_\xi)_{\xi\leq\alpha}$. 
By Lemma~\ref{L:onso},
it suffices to check that for every sequence $(F_n')$ such that $F_n'$ is $\tau_{\xi_n}'$-closed, for some $\xi_n<\alpha$, and $\bigcup_n F_n'$ contains
a non-empty relatively $\tau_\alpha'$-open subset of $A$, there is $n$ such that ${\rm int}_{\tau_\alpha'}(F_n')\not=\emptyset$. Let $F_n$ be $\tau_{\xi_n}$-closed with 
$F_n'=F_n\cap X'$. Our assumption on $(F_n')$ implies that 
$\bigcup_n \left( F_n\cap X'\right)$ contains a non-empty relatively $\tau_\alpha$-open subset of $A$ since $A$ 
is a subset of $X'$. Now consider the countable family $\{ F_n\cap X'\mid n\in {\mathbb N}\}$. 
Since $X'$ is $\tau_\alpha$-closed, the sets $F_n\cap X'$ are $\tau_\alpha$-closed. Since 
$A$ is $(\alpha+1)$-solid with respect to $(\tau_\xi)_{\xi\leq\alpha+1}$, there is $n$ such that 
\begin{equation}\label{E:lat}
{\rm int}_{\tau_{\alpha+1}}(F_n\cap X')\not=\emptyset.
\end{equation}
Since $(\tau_\xi)_{\xi\leq\alpha+1}$ is a filtration, equation \eqref{E:lat} gives 
${\rm int}_{\tau_{\alpha}}(F_n)\cap X'\not=\emptyset$, that is, 
\[
{\rm int}_{\tau_{\alpha}'}(F_n\cap X')\not=\emptyset, 
\]
as required. 

Thus, to reach the desired conclusion by using Lemma~\ref{L:stab} (applied to $A=B$ and $X'$ with $(\tau_\xi')_{\xi\leq\alpha}$), 
it suffices to check condition \eqref{E:intap} for $(\tau_\xi')_{\xi\leq\alpha}$, which we now do. 
Let $\xi<\beta<\alpha$, and let $F$ 
be $\tau_\xi$-closed. We need to check that ${\rm int}_{\tau'_\beta}(F\cap X')$ is $\tau_\alpha'$-dense in ${\rm int}_{\tau'_\alpha}(F\cap X')$. 
Let 
\[
N = {\rm int}_{\tau_{\alpha+1}}(X').
\]
Since $A$ is solid, $N$ is $\tau_\alpha$-dense in $X'$. It will therefore suffice to check that 
\begin{equation}\label{E:nee}
{\rm int}_{\tau'_\alpha}(F\cap X')\cap N \subseteq {\rm int}_{\tau'_\beta}(F\cap X'). 
\end{equation}
Observe that 
\begin{equation}\label{E:pri}
{\rm int}_{\tau'_\alpha}(F\cap X')\cap N \subseteq {\rm int}_{\tau'_{\alpha+1}}(F\cap X')\cap N, 
\end{equation}
and, further, since $N$ is $\tau_{\alpha+1}$-open and included in $X'$, we have 
\begin{equation}\label{E:plus}
{\rm int}_{\tau'_{\alpha+1}}(F\cap X')\cap N \subseteq {\rm int}_{\tau_{\alpha +1}}(F\cap N) = {\rm int}_{\tau_{\alpha +1}}(F) \cap N.
\end{equation}
Note that we use $\tau_{\alpha+1}$-openness of $N$ in our verification of the inclusion and the equality in \eqref{E:plus}. 
On the other hand, we have  
\begin{equation}\label{E:ir}
{\rm int}_{\tau_\beta}(F)\cap X'\subseteq {\rm int}_{\tau'_\beta}(F\cap X'). 
\end{equation}
By combining \eqref{E:pri}, \eqref{E:plus}, and \eqref{E:ir}, we see that to prove \eqref{E:nee}, 
it is enough to show 
\[
{\rm int}_{\tau_{\alpha +1}}(F)\cap N\subseteq {\rm int}_{\tau_\beta}(F)\cap X'. 
\]
Since $N\subseteq X'$, this inclusion immediately follows from ${\rm int}_{\tau_{\alpha +1}}(F)\subseteq {\rm int}_{\tau_\beta}(F)$. 
\end{proof}

\begin{proof}[Proof of Theorem~\ref{T:eqte}]
Let $x,y\in X$ be such that $[x]_E$ and $[y]_E$ are ${\bf \Pi}^0_{1+\xi}$ for some $\xi<\alpha$. If 
$xE_\xi y$, then ${\rm cl}_\xi([x]_E) = {\rm cl}_\xi([y]_E)$. 
Note that $\xi+1<\alpha\oplus 1$. Using this assumption, metrizability of $\tau_\gamma$ for 
$\gamma<\xi$, and $[x]_E$ and $[y]_E$ being $(\xi+1)$-solid, by Lemma~\ref{L:stabre}, we see that 
$[x]_E$ and $[y]_E$ are both $\tau_\xi$-comeager in
${\rm cl}_\xi([x]) = {\rm cl}_\xi([y])$. This last set is $\tau_\xi$-closed and $\tau_\xi$ is completely metrizable, so $[x]_E$ and $[y]_E$ intersect; thus, $xEy$. 

It follows from the argument above that if each $E$ equivalence class is in in the family $\bigcup_{\xi<\alpha}{\bf \Pi}^0_{1+\xi}$ with respect to $\sigma$, then 
$\bigcap_{\xi<\alpha}E_\xi \subseteq E$, so $E=\bigcap_{\xi<\alpha}E_\xi$, as required. 
\end{proof}

The following corollary is a consequence of Theorem~\ref{T:eqte}, in which the assumption on equivalence classes is phrased in terms of $\tau$. We 
emphasize that no separability assumptions are needed.

\begin{corollary}\label{C:ceq}
Let $\sigma\subseteq \tau$ be topologies, 
with $\tau$ being Baire. Let $\alpha\leq\omega_1$, and let 
$(\tau_\xi)_{\xi<\alpha}$ be a filtration from $\sigma$ to $\tau$, with $\tau_\xi$ completely metrizable for each $\xi<\alpha$. 
Assume $E$ is an equivalence relation whose equivalence classes are $\tau$-open.

If all $E$ equivalence classes are in $\bigcup_{\xi<\alpha} {\bf \Pi}^0_{1+\xi}$ with respect to $\sigma$, then $E=\bigcap_{\xi<\alpha}E_\xi$. 
\end{corollary}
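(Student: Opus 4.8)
The plan is to reduce the statement to Theorem~\ref{T:eqte} by first putting the filtration into the index range $\alpha\oplus 1$ required there, and then verifying its hypotheses, the only nonvacuous one being that the $\tau$-open equivalence classes are solid. First I would settle the indexing. Extend the given filtration to all ordinals by setting $\tau_\xi=\tau$ for $\xi\geq\alpha$; as noted in the discussion of filtrations, this is still a filtration from $\sigma$ to $\tau$, hence a filtration from $\sigma$. When $\alpha$ is $0$ or a limit ordinal we have $\alpha\oplus 1=\alpha$, so $(\tau_\xi)_{\xi<\alpha\oplus 1}$ is the original sequence; when $\alpha=\beta+1$ we have $\alpha\oplus 1=\alpha+1$, and the extension supplies the single extra topology $\tau_\alpha=\tau$. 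In both cases $(\tau_\xi)_{\xi<\alpha\oplus 1}$ is a filtration from $\sigma$ with $\tau_\xi$ completely metrizable for $\xi<\alpha$, matching the standing assumptions of Theorem~\ref{T:eqte}. Crucially, $\bigcap_{\xi<\alpha}E_\xi$ and the complexity hypothesis involve only $\tau_\xi$ for $\xi<\alpha$, which the extension leaves untouched, and Theorem~\ref{T:eqte} demands metrizability only below $\alpha$, so the possibly non-metrizable $\tau_\alpha=\tau$ causes no difficulty.

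The heart of the matter is to show that every nonempty $\tau$-open set $B$ is $\gamma$-solid for each $\gamma<\alpha\oplus 1$; this is precisely the argument opening the proof of Theorem~\ref{T:stab2}, which I would reproduce. Given a countable family of $\gamma$-tame sets $F_n$ whose union contains a nonempty relatively $\tau_\gamma$-open subset of $B$, that subset has the form $V\cap B$ with $V$ being $\tau_\gamma$-open, hence $\tau$-open since $\tau_\gamma\subseteq\tau$; as $B$ is $\tau$-open, $V\cap B$ is a nonempty $\tau$-open set, so $\bigcup_n F_n$ has nonempty $\tau$-interior. By Lemma~\ref{L:fsi}(i), applicable because $\tau_\xi$ is completely metrizable for $\xi<\gamma\leq\alpha$, each $F_n$ is a countable union of $\tau_\gamma$-closed, hence $\tau$-closed, sets; since $\tau$ is Baire, some $F_{n_0}$ satisfies ${\rm int}_\tau(F_{n_0})\neq\emptyset$.

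To finish the solidity claim I would split on $\gamma$. If $\gamma\geq\alpha$ then $\tau_\gamma=\tau$, so ${\rm int}_{\tau_\gamma}(F_{n_0})={\rm int}_\tau(F_{n_0})\neq\emptyset$ at once. If $\gamma<\alpha$, then since $(\tau_\xi)_\xi$ is a weak filtration from $\sigma$ to $\tau$ and $F_{n_0}$ is $\gamma$-tame, Lemma~\ref{L:frth}(i) shows that ${\rm int}_{\tau_\gamma}(F_{n_0})$ is $\tau$-dense in ${\rm int}_\tau(F_{n_0})$, hence nonempty. In either case $B$ is $\gamma$-solid, as required.

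Finally, since every equivalence class of $E$ is $\tau$-open, the preceding step shows each class is $\gamma$-solid for every $\gamma<\alpha\oplus 1$, that is, solid in the sense of Theorem~\ref{T:eqte}. Together with the hypothesis that all classes lie in $\bigcup_{\xi<\alpha}{\mathbf \Pi}^0_{1+\xi}$ with respect to $\sigma$, all hypotheses of Theorem~\ref{T:eqte} are met, and it yields $E=\bigcap_{\xi<\alpha}E_\xi$ (the case $\alpha=0$ being trivial). I expect the only delicate point to be the bookkeeping of the $\alpha\oplus 1$ indexing together with the verification that metrizability, needed for Lemma~\ref{L:fsi}(i) and thus for solidity, is invoked only at indices $\xi<\alpha$, so that the extended top topology $\tau_\alpha=\tau$ may remain non-metrizable; the solidity computation itself is a direct transcription of the corresponding step in the proof of Theorem~\ref{T:stab2}.
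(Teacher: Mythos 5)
Your proposal is correct and takes essentially the same route as the paper: extend the filtration by setting $\tau_\alpha=\tau$, apply Theorem~\ref{T:eqte}, and reduce everything to showing that $\tau$-open equivalence classes are solid because $\tau$ is Baire. The only difference is bookkeeping: the paper checks just $\alpha$-solidity via Lemma~\ref{L:onsl}(ii) and transfers it downward by Lemma~\ref{L:frth}(ii), whereas you verify $\gamma$-solidity for each $\gamma<\alpha\oplus 1$ directly from Lemma~\ref{L:fsi}(i) and Lemma~\ref{L:frth}(i), which simply unwinds those same lemmas.
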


\begin{remark} 
{\bf 1.} Each $E$ equivalence class being $\tau$-open, as in the corollary above, is equivalent to saying that $E$ is a $(\tau\times\tau)$-open subset of $X\times X$. 

{\bf 2.} As in Theorem~\ref{T:eqte}, in Corollary~\ref{C:ceq}, if $\alpha<\omega_1$ is a successor, 
say $\alpha=\beta+1$, then the conclusion reads: if all equivalence classes of $E$ are in 
${\bf \Pi}^0_{1+\beta}$ with respect to $\sigma$, 
then $E=E_\beta$. 
\end{remark}

\begin{proof}[Proof of Corollary~\ref{C:ceq}] Extend the given filtration to a filtration $(\tau_\xi)_{\xi< \alpha+1}$ by setting $\tau_\alpha=\tau$. 
It follows from Theorem~\ref{T:eqte}, that it suffices to check that $E$ equivalence classes are solid. Since $(\tau_\xi)_{\xi< \alpha+1}$ is 
a filtration from $\sigma$ to $\tau$, by Lemma~\ref{L:frth}(ii), it suffices to check that $E$ equivalence classes are $\alpha$-solid. This is immediate, 
by Lemma~\ref{L:onsl}(ii), 
from $\tau$ being Baire and each $E$-class being $\tau$-open. 
\end{proof}

\smallskip

\noindent {\bf Acknowledgments.} I would like to thank Assaf Shani for pointing out paper \cite{Dr} to me.

\end{document}